\definecolor{titlepagecolor}{cmyk}{1,.60,0,.40}
\DeclareFixedFont{\titlefont}{T1}{ppl}{b}{it}{0.5in}
\def\th@plain{%
  \thm@notefont{}% same as heading font
  \itshape % body font
}
\def\th@definition{%
  \thm@notefont{}% same as heading font
  \normalfont % body font
}
\theoremstyle{definition}
\newtheorem{definition}{Definition}[section]
\newtheorem{theorem}[definition]{Theorem}
\newtheorem{prop}[definition]{Proposition}
\newtheorem{remark}[definition]{Remark}
\newcommand{\Ac}{\mathcal{A}}
\newcommand{\Bc}{\mathcal{B}}
\newcommand{\Cc}{\mathcal{C}}
\newcommand{\Fc}{\mathcal{F}}
\newcommand{\Gc}{\mathcal{G}}
\newcommand{\Hc}{\mathcal{H}}
\newcommand{\Ic}{\mathcal{I}}
\newcommand{\Jc}{\mathcal{J}}
\newcommand{\Kc}{\mathcal{K}}
\newcommand{\Lc}{\mathcal{L}}
\newcommand{\Mc}{\mathcal{M}}
\newcommand{\Pc}{\mathcal{P}}
\newcommand{\Rc}{\mathcal{R}}
\newcommand{\Tc}{\mathcal{T}}
\newcommand{\vx}{\textbf{\textit{x}}}
\newcommand{\vw}{\textbf{\textit{w}}}
\newcommand{\vv}{\textbf{\textit{v}}}
\newcommand{\vf}{\textbf{\textit{f}}}
\newcommand{\vg}{\textbf{\textit{g}}}
\newcommand{\DR}{\mathbb{D}_R}
\newcommand{\vuu}{\textbf{\textit{u}}}
\title{\vspace{-1cm} \hspace{1cm}Tensor tomography for a set of generalized V-line transforms in $\mathbb{R}^2$}
\author{Rahul Bhardwaj\thanks{Department of Mathematics, Indian Institute of Technology, Ropar, Punjab-140001, India. \url{rahul.24maz0014@iitrpr.ac.in}}}
\newcommand\shorttitle{generalized broken ray transforms in $\mathbb{R}^{2}$}
\newcommand\authors{Rahul Bhardwaj}
\begin{document}
\maketitle
\begin{abstract}
 We study a set of generalized V-line transforms, namely longitudinal, mixed, and transverse V-line transforms, of a symmetric $m$-tensor field in $\mathbb{R}^2$. The goal of this article is to recover a symmetric $m$-tensor field $\vf$ supported in a disk $\DR$, with radius $R$ and centered at the origin, by a combination of the aforementioned generalized V-line transforms, using two different techniques for different
sets of data. 

\end{abstract}
\textbf{Keywords:} Inverse problems, V-line transforms, Tensor tomography, Mellin transform
\vspace{2mm}

\noindent \textbf{Mathematics subject classification 2010:} 44A12, 44A60, 44A30, 47G10
\section{Introduction }
 Several imaging modalities in computerized tomography are based on the mathematical model, which uses the regular ray transform and the broken ray/V-line transform, see \cite{Natterer_2001,F.Natterer_2001,amb-book} and references therein. Many authors have studied the inverse problems of recovering the scalar function (0-tensor), vector field (1-tensor), $2$-tensor field, and $m$-tensor field in various settings from different combinations of its integral transform such as longitudinal, transverse, mixed, and momentum ray transforms. There are several results available related to inversion of the regular ray transforms (i.e. integrates along a straight line), for instance, see \cite{Denisyuk_1994, Denisyuk_2006, Derevtsov_2015, Katsevich_2006, Mishra_2020, Rohit_Suman_2021, Monard1,  Palamodov_2009,  Sharafutdinov_Book} and references therein. The concept of the V-line transform appears in the model of photon transfer through an object, assumes that photons scatter no more than once, known as single-scattering tomography, see \cite{florescu2009single}. In the last few years, there are various articles focused on studying the broken ray/V-line transforms, which map a function to its integral over the V-shaped lines.  This transform is a generalization of the regular ray transform. Many applications of these transforms appear in the imaging sciences viz, emission tomography \cite{Basko1996AnalyticalRF,Basko1998ApplicationOS}, single scattering optical tomography, see \cite{Florescu2008SinglescatteringOT,Florescu2010InversionFF} and references therein. Now we present a list of the earlier works related to the V-line transforms. The authors in \cite{T.Truong_2011} have studied the V-line transform with a fixed axis of symmetry, and vertices on a line. In \cite{Haltmeier_2017,Haltmeier_attenuated}, the authors addressed the V-line transform and the attenuated V-line transform for the scalar-valued function in a circular geometry structure, where the symmetry axis is perpendicular to the circle and the vertices lies on a circle. In \cite{Indrani_2024}, authors studied the V-line transform in the same setup and considered the inverse problem to recover the symmetric $m$-tensor field from the various combinations. The other class includes V-lines whose vertices are inside the support of the image function and are significant to single-scattering tomography (e.g. optical tomographic imaging in the mesoscopic regime), see \cite{florescu2009single, florescu2010single, Florescu-Markel-Schotland, walker2019broken} and fluorescence imaging in optical tomography, see \cite{florescu_2018}. Based on this class of V-line, the question of recovering a vector field in $\mathbb{R}^2$ from different combinations of new generalized V-line transforms is investigated by  Gaik et al. in \cite{Gaik_Latifi_Rohit, Gaik_Mohammad_Rohit} and also this work is extended by \cite{Gaik_Rohit_Indrani, Indrani_2024v}, Gaik et al. for the case of symmetric $2$-tensor fields in $\mathbb{R}^2$. The set up where the V-line have vertices on the boundary of the image domain are frequently used in image reconstruction with Compton cameras, see \cite{Terzioglu_2018, basko1997fully, basko1997analytical} and references therein. In this setting, authors in  \cite{Ambarsoumian_2013, Ambarsoumian_2012, Ambartsoumian2013} consider the inverse problem for recovering the scalar-valued function by two different techniques. Extension for vector fields is considered in \cite{bhardwaj_2024}, Bhardwaj et al. for the case of the vector field. Also, the broken ray/V-line transforms have been studied in the Riemannian setting  \cite{Shubham,Ilmavirta,Ilmavirta_Mikko,Mark}, based on  PDEs work \cite{Eskin,Carlos_Mikko}, and in the set-up of microlocal analysis by Sherson, see \cite{ Sherson}. Please refer to a recent book by Ambartsoumian \cite{amb-book} for a comprehensive discussion of these generalized operators and an overview of the literature. 
  
Motivated by these works \cite{Ambarsoumian_2012,Ambarsoumian_2013,bhardwaj_2024}, we consider the inverse problem of recovering a symmetric $m$-tensor field supported on the disk $\DR$, using a set of generalized V-line transforms, namely longitudinal, transverse, and mixed V-line transforms. We present two different types of inversion formulas, one for recovering the symmetric $m$-tensor field and other for $2$-tensor field in $\mathbb{R}^2$ from the knowledge of its V-line transforms. Here, we examine two distinct subsets of the previously described broken lines in $\mathbb{R}^2$ to recover the symmetric $m$-tensor field.  In this article, geometrically, we consider the broken rays/V-line that emanate from the boundary of a disk $\DR$ and break into another ray under a fixed angle after travelling a certain distance along the diameter. In this article, we used the approaches given by \cite{Ambarsoumian_2013, Ambarsoumian_2012}, which is done for the scalar case, when the sources and receivers are positioned on the boundary of the disk. We considered a similar set up for symmetric $m$-tensor fields and proved invertibility of the introduced transforms. 

The rest of the article is organized as follows: In Section \ref{sec:Definition and notations}, we introduce the necessary notations and definitions of the integral ray transform. In Section \ref{sec:main results}, we present our main findings of this article. The detailed proof of our main results is provided in Sections \ref{sec:full recovery th} and \ref{sec:partial data case}. Finally, we finish this article with acknowledgements in Section \ref{sec:acknowledgements}. 
%%%%%%%%%%%%%%%%%%%%%%%%%%%%%%%%%%%%%%%%%%%%%%%%%%%%%%%%%%%%%%%%%%%%%%%%%%%%%%%%%%%%%%%%%%%%%%%%%%%%%
 \section{Definitions, notations and statement of main results}\label{sec:Definition and notations}
 \subsection{Definitions and Notations}In this section, we introduce some necessary notations and provide definitions that will be needed later. Let $S^{m}(\mathbb{R}^{2})$ be the space of symmetric $m$-tensor fields on $\mathbb{R}^2$. Let $\Omega$ be a bounded domain in $\mathbb{R}^2$ and  $C_0^{\infty}(\Omega; S^{m}(\mathbb{R}^{2}))$  be the space of $S^{m}(\mathbb{R}^{2})$ valued function with each component is smooth and compactly supported in $\Omega$. Throughout this article, we have used bold font letters for vectors and $m$-tensor fields (such as $\vf$, $\vx$, $\vuu$, etc.) and regular font letters for scalars and scalar-valued functions (such as $u_1$, $u_2$, $t$, $f_{i_1i_2\cdots i_m}$, etc). Any $\vf \in C_0^{\infty}(\Omega; S^{m}(\mathbb{R}^{2}))$ is defined as
 \begin{align}\label{Einstein summation}
     \vf(\vx) := f_{i_1i_2\cdots i_m}(\vx)\,dx^{i_1}\,dx^{i_2}\,\cdots\,dx^{i_m}%:=\sum_{\{1\leq i_j\leq 2\ : \ 1\leq j\leq m\}}f_{i_1i_2\cdots i_m}(\vx)\,dx^{i_1}\,dx^{i_2}\,\cdots\,dx^{i_m}
 \end{align}
 where for each $i_1,i_2,\cdots,i_m = 1, 2$, $f_{i_1i_2\cdots i_m}$ are smooth functions with compact support and also symmetric with their indices. Note that in equation \eqref{Einstein summation} we have used the \textquotedblleft{\textbf{Einstein summation rule}}'' which allows us to assume the summation from $1$ to $2$ over every repeated indices and the same will be followed through out this article. The standard dot product $\displaystyle \left\langle \cdot,\cdot\right\rangle$ on $ S^{m}(\mathbb{R}^{2})$  is given by  \begin{align*}
    \left\langle \vf,\vg \right\rangle = f_{i_1i_2\cdots i_m}g^{i_1i_2\cdots i_m} 
\end{align*}
and we denote by $|\cdot|$  the corresponding norm.  Let $\vuu = (u^1, u^2) \in \mathbb{R}^2$, then $\vuu^m \in S^{m}(\mathbb{R}^2)$, where $\vuu^m$ stands for the $m^{th}$ symmetric tensor product of $\vuu$ and it is given by
$\vuu^{m} := u^{i_1}u^{i_2}\cdots u^{i_m} $, where $i_1,i_2,\cdots ,i_m = 1, 2$.

Let $\vf$ be a symmetric $m$-tensor field supported in the disk $\DR$ and $\theta \in (0, \pi/2)$ be a fixed scattering angle. A combination of two rays which emanate from the point $\vx_\phi \in \partial \DR$ and moves the distance $d$ along the diameter in the radial direction $\vuu_\phi$, then breaks into another ray under the obtuse angle $\pi - \theta$ and travels in the direction $\vv_\phi$ (see \ref{fig: def of broken ray}), is called the \textbf{broken ray} and is denoted by $BR(\phi, d)$. Mathematically, the  broken ray $BR(\phi, d)$ is described by:
\begin{align}\label{eq:definition of BR(beta,d)}
    BR(\phi,d) =  \left\{\vx_\phi + t \vuu_\phi: 0\leq t \leq d\right\}  \cup \left\{\vx_\phi + d \vuu_\phi + s \vv_\phi: 0 \leq s < \infty\right\}.
\end{align}
 Here $\vx_\phi = (R \cos \phi, R \sin \phi)$, $\vuu_\phi =  -(\cos \phi, \sin \phi)$ and $\vv_\phi = -\left(\cos (\theta + \phi), \sin (\theta +\phi)\right)$, lie on the unit circle $\partial\mathbb{D}_1$ and the vectors  $\vuu_{\phi}^\perp = (\sin \phi, - \cos \phi)$ and $\vv_\phi^{\perp} = \left(\sin (\theta +\phi), -\cos (\theta + \phi)\right)$  are obtained by rotating $\vuu_\phi$ and $\vv_\phi$ in the direction of counterclockwise by an angle of $\pi/2$ respectively.

The unit vectors $\vuu_\phi$ and $\vv_\phi$, which are used in the following definitions, are fixed and uniquely determined by $(\phi, d)$.
\begin{figure}[tbp]
\centering
\begin{subfigure}[t]{0.48\textwidth}
\includegraphics[width=1.06\textwidth]{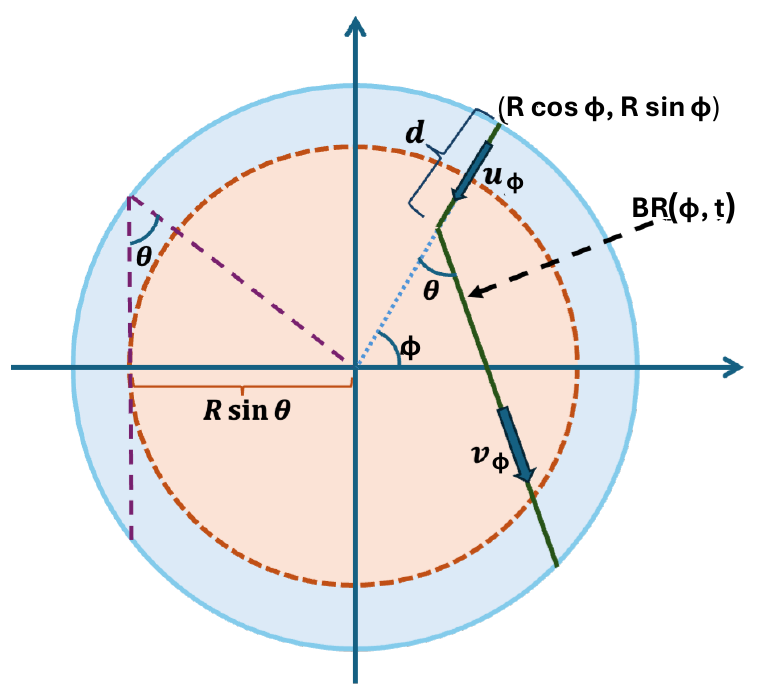}\caption{A sketch of broken line $BR(\phi, d)$.}\label{fig: def of broken ray}
\end{subfigure}
\hfill
\begin{subfigure}[t]{0.48\textwidth}
\centering
\includegraphics[width=1.1\textwidth]{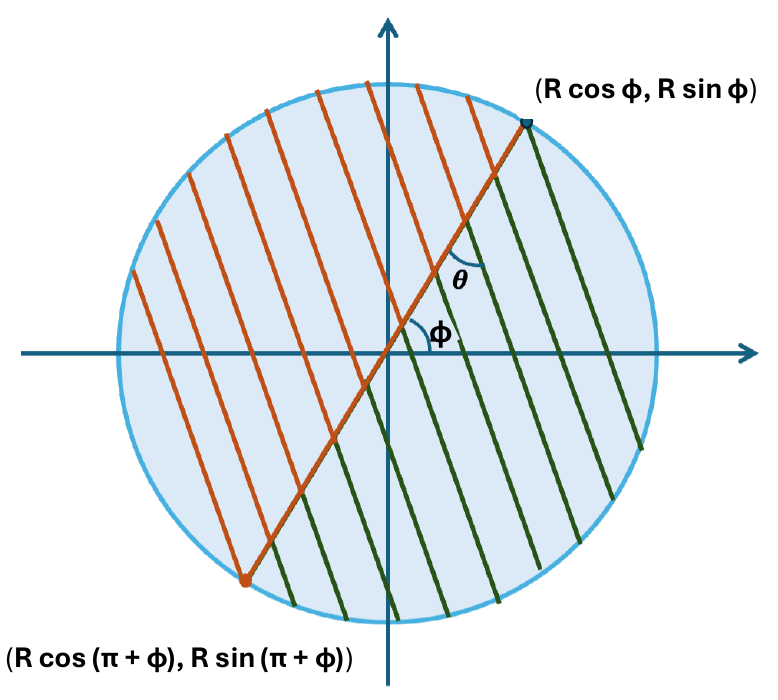}\caption{Broken lines considered to collect data.}\label{fig: data is taken over}
\end{subfigure}
\end{figure}
Now, we introduce the broken ray/V-line transforms, defined as the integrals over the broken ray $BR(\phi,d)$.
%%%%%%%%%%%%%%%%%%%%%%%%%%%%%%%%%
\begin{definition}\label{def:longitudinal v-line transform}
For $\vf \in C_0^{\infty}(\DR; S^{m}(\mathbb{R}^{2}))$, the \textbf{longitudinal V-line transform} of $\vf$ is defined by
\begin{equation}\label{eq:def LVT}
\mathcal{L}\vf (\phi,d) := \int_{0}^{d} \left\langle \vf (\vx_\phi +s\vuu_\phi), \vuu_\phi^{m} \right\rangle \,ds + \int_{0}^{\infty} \left\langle\vf(\vx_\phi + d\vuu_\phi + s\vv_\phi), \vv_\phi^{m} \right\rangle\,ds,
\end{equation}
where $\phi \in [0, 2\pi)$ and $ d \in [0,2R]$. 
\end{definition}

\begin{definition}\label{def:transverse v-line transform}
For $\vf \in C_0^{\infty}(\DR; S^{m}(\mathbb{R}^{2}))$, the \textbf{transverse V-line transform} of $\vf$ is defined by
\begin{equation}\label{eq:def TVT}
\mathcal{T}\vf (\phi,d) := \int_{0}^{d} \left\langle\vf (\vx_\phi +s\vuu_\phi),(\vuu_\phi^\perp)^{m} \right\rangle\,ds + \int_{0}^{\infty}  \left\langle \vf(\vx_\phi + d\vuu_\phi + s\vv_\phi),(\vv_\phi^\perp)^{m}\right\rangle\,ds,
\end{equation} 
where $\phi \in [0, 2\pi)$ and $ d \in [0,2R]$. 

\end{definition}

\begin{definition}\label{def:mixed v-line transform}
For $\vf \in C_0^{\infty}(\DR; S^{m}(\mathbb{R}^{2}))$ and $1 \leq k \leq m-1$, the \textbf{mixed V-line transform} of $\vf$ is defined by
\begin{equation}\label{eq:def MVT}
\mathcal{M}^{(k)}\vf (\phi,d) := \int_{0}^{d} \left\langle\vf (\vx_\phi +s\vuu_\phi),(\vuu_\phi^\perp)^{k}\vuu_\phi^{m-k} \right\rangle\,ds + \int_{0}^{\infty}  \left\langle \vf(\vx_\phi + d\vuu_\phi + s\vv_\phi), (\vv_\phi^\perp)^{k}\vv_\phi^{m-k}\right\rangle\,ds,  
\end{equation}
where $(\vuu_\phi^\perp)^{k}\vuu_\phi^{m-k} := (\vuu_\phi^{\perp})^{i_1}(\vuu_\phi^{\perp})^{i_2}\cdots (\vuu_\phi^{\perp})^{i_k}(\vuu_\phi^{\perp})^{i_{k+1}}\cdots \vuu_\phi^{i_m} $, for $i_1,i_2,\cdots ,i_m = 1, 2$, $\phi \in [0, 2\pi)$ and $ d \in [0,2R]$. 
\end{definition}
\begin{remark}
In the definition of the mixed V-line transform, when $k=0$, the transform reduces to the definition of the longitudinal V-line transform, and when $k=m$, it reduces to the transverse V-line transform.
\end{remark}
\noindent Subsequently, we introduce the longitudinal, transverse, and mixed ray transforms, which will be referenced later in this article. These transforms are integrated over a straight line, defining the line \( L(\psi,p):= \{(y_1,y_2) \in \mathbb{R}^2: y_1 \cos{\psi} + y_2 \sin{\psi} = p\} \), for a given angle $\psi \in [0, 2\pi)$ and a real number $p$. This line is positioned at a signed distance $p$ from the origin and is perpendicular to the unit vector $\vw = (\cos{\psi},\sin{\psi})$. The vector $\vw^\perp = (-\sin{\psi},\cos{\psi})$ is also a unit vector in $\mathbb{R}^2$ that is perpendicular to the vector $\vw$.
\begin{definition}\label{def:longitudinal ray transform}
For $\vf \in C_0^{\infty}(\DR; S^{m}(\mathbb{R}^{2}))$, the \textbf{longitudinal ray transform} of $\vf$ is defined by
\begin{equation}\label{eq:def LRT}
\mathcal{I}\vf (\psi,p) = \mathcal{I}\vf (\vw,p) := \int_\mathbb{R} \left\langle\vf (p \vw + s\vw^\perp), (\vw^\perp)^{m} \right\rangle\,ds,  \quad \psi \in [0, 2\pi) \mbox { and } p \in \mathbb{R}.
\end{equation}
\end{definition}

\begin{definition}\label{def:transverse ray transform}
For $\vf \in C_0^{\infty}(\DR; S^{m}(\mathbb{R}^{2}))$, the \textbf{transverse ray transform} of $\vf$ is defined by
\begin{equation}\label{eq:def TRT}
\mathcal{J}\vf (\psi,p) = \mathcal{J}\vf (\vw,p) := \int_\mathbb{R} \left\langle \vf (p \vw + s\vw^\perp), \vw^{m} \right\rangle\,ds, \quad \psi \in [0, 2\pi) \mbox { and } p \in \mathbb{R}.
\end{equation}
\end{definition}  

\begin{definition}\label{def:mixed ray transform}
For $\vf \in C_0^{\infty}(\DR; S^{m}(\mathbb{R}^{2}))$ and $1 \leq k \leq m-1$, the \textbf{mixed ray transform} of $\vf$ is defined by
\begin{equation}\label{eq:def MRT}
\mathcal{K}^{(k)}\vf (\psi,p) = \mathcal{J}\vf (\vw,p) := \int_\mathbb{R} \left\langle \vf (p \vw + s\vw^\perp), \vw^{k}(\vw^\perp)^{m-k}\right\rangle\,ds, 
\end{equation}
where $\vw^{k}(\vw^\perp)^{m-k} := w^{i_1}w^{i_2}\cdots w^{i_k}(w^{\perp})^{i_{k+1}}\cdots (w^{\perp})^{i_{m}} $, for $i_1,i_2,\cdots ,i_m = 1, 2$, $\psi \in [0, 2\pi)$ and $p \in \mathbb{R}$.
\end{definition}
 
\noindent Furthermore, we provide the definition of the Radon transform, which we also require later on:
\begin{definition}\label{def:the Radon transform}
Let $f$ be a scalar function field in  $C_0^\infty(\DR)$. The \textbf{Radon transform} $f$ is defined as follows:
\begin{equation}\label{eq:def Radon transform}
\Rc f (\psi,p) = \Rc f (\vw,p) := \int_\mathbb{R} f (p \vw + s\vw^\perp)\,ds, \quad \psi \in [0, 2\pi) \mbox { and } p \in \mathbb{R}.
\end{equation}
\end{definition}
%%%%%%%%%%%%%%%%%%%%%%%%%%%%%%%%%%%%%%%%%%%%%%%%%%%%%%%%%%%%%%%%%
\subsection{Statement of main results}\label{sec:main results}
In this subsection, we state the main results of this article. 

\begin{theorem}\label{th:full data recovery}
Let $\vf$ be a symmetric $m$-tensor field with each component smooth and compactly supported in a disk $\mathbb{D}_{R\sin{\theta}}$. Then we can recover  $\vf$ explicitly with an inversion formula,  from the knowledge of   $\Lc \vf (\phi, d)$, $\Tc \vf (\phi, d)$ and $\Mc^{(k)} \vf (\phi, d)$ where $1\leq k \leq m-1$, for $d \in [0, 2R]$ and $\phi \in [0, 2\pi)$. 
\end{theorem}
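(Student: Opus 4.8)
The plan is to recover from the V-line data the classical longitudinal, transverse and mixed \emph{ray} transforms of $\vf$ over all lines meeting $\mathbb{D}_{R\sin\theta}$, and then to invert those componentwise using elementary linear algebra together with the scalar Radon inversion formula.

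\emph{Step 1 (from V-lines to lines).} The geometric fact driving everything is that, for every $\phi\in[0,2\pi)$ and $d\in[0,2R]$, the broken rays $BR(\phi,d)$ and $BR(\phi+\pi,2R-d)$ have the same vertex $\vx_\phi+d\vuu_\phi=\vx_{\phi+\pi}+(2R-d)\vuu_{\phi+\pi}$; their radial segments together trace the full diameter in the direction $\vuu_\phi$; and their scattered rays leave the common vertex in the opposite directions $\vv_\phi$ and $\vv_{\phi+\pi}=-\vv_\phi$, hence together trace the entire line through the vertex with direction $\vv_\phi$, a line at distance $|R-d|\sin\theta$ from the origin. I would split each of $\mathcal{L},\mathcal{T},\mathcal{M}^{(k)}$ into its radial part (the integral over the segment $[\vx_\phi,\vx_\phi+d\vuu_\phi]$) and its scattered part (the integral over the ray issued from the vertex), and use two facts. (i) For $d=2R$ the scattered ray of $BR(\phi,2R)$ lies on the line through $-\vx_\phi$ that is tangent to $\partial\mathbb{D}_{R\sin\theta}$, so by the support hypothesis its scattered part vanishes and $\mathcal{L}\vf(\phi,2R),\mathcal{T}\vf(\phi,2R),\mathcal{M}^{(k)}\vf(\phi,2R)$ are exactly the longitudinal, transverse and mixed ray transforms of $\vf$ along the diameter at angle $\phi$. (ii) In the combination $\mathcal{M}^{(k)}\vf(\phi,d)+(-1)^m\mathcal{M}^{(k)}\vf(\phi+\pi,2R-d)$ the radial parts telescope into a full-diameter integral, which by (i) equals $\mathcal{M}^{(k)}\vf(\phi,2R)$, while the scattered parts glue into a full-line integral. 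After keeping track of the signs $(-1)^m$, $(-1)^{m-k}$ produced by replacing $\vuu_\phi,\vv_\phi$ by $-\vuu_\phi,-\vv_\phi$, this yields explicit identities of the form
\begin{align*}
(-1)^{m-k}\,\mathcal{K}^{(k)}\vf(\psi_\phi,p_d)=\mathcal{M}^{(k)}\vf(\phi,d)+(-1)^m\mathcal{M}^{(k)}\vf(\phi+\pi,2R-d)-\mathcal{M}^{(k)}\vf(\phi,2R),
\end{align*}
for $k=0,1,\dots,m$, with the conventions $\mathcal{M}^{(0)}=\mathcal{L}$, $\mathcal{M}^{(m)}=\mathcal{T}$, $\mathcal{K}^{(0)}=\mathcal{I}$, $\mathcal{K}^{(m)}=\mathcal{J}$, and with $\psi_\phi=\theta+\phi-\pi/2$, $p_d=(R-d)\sin\theta$. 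As $(\phi,d)$ runs over $[0,2\pi)\times[0,2R]$ the pair $(\psi_\phi,p_d)$ runs over $[0,2\pi)\times[-R\sin\theta,R\sin\theta]$; since $\mathcal{I}\vf,\mathcal{J}\vf,\mathcal{K}^{(k)}\vf$ all vanish for $|p|>R\sin\theta$ by the support assumption, we thereby obtain $\mathcal{I}\vf,\mathcal{J}\vf$ and $\mathcal{K}^{(k)}\vf$, $1\le k\le m-1$, on every line in $\mathbb{R}^2$.

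\emph{Step 2 (inverting the ray transforms).} For a fixed direction $\vw=(\cos\psi,\sin\psi)$ the $m+1$ tensors $\vw^{k}(\vw^\perp)^{m-k}$, $0\le k\le m$, form a basis of $S^m(\mathbb{R}^2)$, and $\mathcal{K}^{(k)}\vf(\psi,p)=\bigl\langle\int_{\mathbb{R}}\vf(p\vw+s\vw^\perp)\,ds,\ \vw^{k}(\vw^\perp)^{m-k}\bigr\rangle$. Inverting this constant, invertible linear system recovers $\mathcal{R}f_{i_1\cdots i_m}(\psi,p)=\int_{\mathbb{R}}f_{i_1\cdots i_m}(p\vw+s\vw^\perp)\,ds$ for each component and all $(\psi,p)$; since every $f_{i_1\cdots i_m}$ is smooth with support in $\mathbb{D}_{R\sin\theta}$, the classical Radon inversion formula returns $f_{i_1\cdots i_m}$, and hence $\vf$. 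Composing Steps 1 and 2 gives the desired explicit inversion formula.

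\emph{Main difficulty.} The heart of the argument, and the point where the hypothesis $\operatorname{supp}\vf\subset\mathbb{D}_{R\sin\theta}$ is essential, is Step 1: verifying the tangency statement so that the $d=2R$ data are honest diameter integrals, propagating the signs $(-1)^m$ and $(-1)^{m-k}$ through the antipodal substitution for the longitudinal, transverse and mixed transforms simultaneously, and checking that $(\phi,d)\mapsto(\psi_\phi,p_d)$ is onto the full line space. Step 2 is then routine linear algebra followed by classical Radon inversion, both explicit.
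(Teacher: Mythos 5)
Your proposal is correct and follows essentially the same route as the paper: pair $BR(\phi,d)$ with $BR(\phi+\pi,2R-d)$ so the radial parts telescope into the diameter integral $\mathcal{M}^{(k)}\vf(\phi,2R)$ and the scattered parts glue into a full line integral at signed distance $\pm(R-d)\sin\theta$, then recover the componentwise Radon transforms from the $m+1$ ray transforms by inverting the change-of-basis matrix for $\{\vw^{k}(\vw^\perp)^{m-k}\}_{k=0}^{m}$ and apply scalar Radon inversion. Your single identity with the weight $(-1)^m$ on the antipodal term cleanly subsumes the paper's separate even/odd cases, and your signs and parametrization (which differ from the paper's $\psi_\phi=\phi+\theta+\pi/2$, $t_d=(R-d)\sin(\pi+\theta)$ only by the harmless relabeling $L(\psi,p)=L(\psi+\pi,-p)$) check out.
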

Observe from the \ref{fig: def of broken ray} that we have no data about straight-line transform outside the disk $D_{R\sin \theta}$ hence the support condition on tensor field $\vf$ in Theorem \ref{th:full data recovery}, is imposed  to conclude the recovery of $\vf$.
We will prove Theorem  \ref{th:full data recovery},  by transferring the existing knowledge about the V-line transforms to that of  the knowledge about corresponding straight-line transforms of tensor field in $\mathbb{R}^2$. After that following the approach used in \cite[Section 5.2]{Derevtsov_2015}, we recover the Radon transform of each component of the symmetric tensor field. Finally using the inversion formula for the Radon transform for  functions, we derive the inversion formula for tensor fields. The approach used to prove the above Theorem is motivated  by \cite{Ambarsoumian_2012} and \cite{bhardwaj_2024} where the analogous problem is considered for  the scalar function and the vector field respectively. 
%%%%%%%%%%%%%%%%%%%%%%%%%%%%%%%%%%%%%
\begin{theorem}\label{th:partial data recovery}
Let $\vf$ be a symmetric $2$-tensor field with each component smooth and compactly supported in a disk $\mathbb{D}_{R}$. Then $\vf$ is uniquely recovered from the knowledge of $\Lc \vf (\phi, d)$, $\Tc \vf (\phi, d)$ and $\Mc \vf (\phi, d)$, where  $d \in [0, R]$ and $\phi \in [0, 2\pi)$. 
\end{theorem}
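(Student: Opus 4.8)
The plan is to reduce the three V-line transforms of the symmetric $2$-tensor field $\vf$ to the corresponding straight-line transforms $\Ic\vf$, $\Jc\vf$, and $\Kc^{(1)}\vf$, but now the tensor is supported in the full disk $\DR$ and we only have partial data $d\in[0,R]$. The key geometric observation is that each broken ray $BR(\phi,d)$ consists of a radial segment of length $d$ along the diameter through $\vx_\phi$, followed by a straight ray in the direction $\vv_\phi$. First I would write $\Lc\vf(\phi,d)$, $\Tc\vf(\phi,d)$, and $\Mc\vf(\phi,d)$ as sums of two integrals, exactly as in the definitions, and then differentiate in $d$. Differentiating the first (radial) integral with respect to $d$ produces a pointwise value of the appropriate tensor contraction on the diameter, while differentiating the second integral (after a change of variables that moves the $d$-dependence out of the integrand's limits) produces a derivative along the outgoing ray. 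This is precisely the mechanism used in \cite{Ambarsoumian_2013} for the scalar case: the combination $\partial_d$ applied to the V-line data, evaluated using the fixed angle $\theta$, converts broken-ray data into data about the single straight ray in direction $\vv_\phi$ through the break point $\vx_\phi + d\vuu_\phi$, plus lower-order terms supported on the diameter.

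Next I would assemble the three transforms into a linear system for the components of $\vf$. Writing $\vuu_\phi=-(\cos\phi,\sin\phi)$, $\vv_\phi=-(\cos(\theta+\phi),\sin(\theta+\phi))$, and the perpendiculars as in the excerpt, the contractions $\langle\vf,\vv_\phi^2\rangle$, $\langle\vf,(\vv_\phi^\perp)^2\rangle$, and $\langle\vf,\vv_\phi^\perp\vv_\phi\rangle$ span the same three-dimensional space (the space of symmetric $2$-tensors at each point) as $\{f_{11},f_{12},f_{22}\}$, and the change-of-basis matrix depends only on $\theta+\phi$, hence is invertible for every fixed $\phi$. Therefore the derived data determine, for each direction $\vv_\phi$ and each break point on the diameter, all three straight-ray transforms $\Ic\vf$, $\Jc\vf$, $\Kc^{(1)}\vf$ along the ray $\{\vx_\phi+d\vuu_\phi+s\vv_\phi:s\ge 0\}$. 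As $\phi$ ranges over $[0,2\pi)$ and $d$ over $[0,R]$, the break points $\vx_\phi+d\vuu_\phi$ sweep out all of $\DR$, and the ray directions $\vv_\phi$ cover all directions; so after this reduction we know the full longitudinal, transverse, and mixed ray transforms of $\vf$ on $\DR$. From here I would invoke the classical result for symmetric $2$-tensor fields that the triple $(\Ic\vf,\Jc\vf,\Kc^{(1)}\vf)$ determines $\vf$ uniquely — e.g. by following \cite[Section 5.2]{Derevtsov_2015} to peel off the Radon transform of each scalar component and then inverting the Radon transform.

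The subtlety — and the reason the support hypothesis and the $d$-range differ from Theorem \ref{th:full data recovery} — is bookkeeping the contributions of the radial segment. When the break point $\vx_\phi+d\vuu_\phi$ itself lies inside the support of $\vf$, the first integral in each V-line transform is genuinely nonzero, and differentiating in $d$ leaves a residual term of the form $\langle\vf(\vx_\phi+d\vuu_\phi),\vuu_\phi^{\otimes}\rangle$ contractions. I would handle this by first recovering the restriction of $\vf$ to each diameter: setting $d=2R$ (or the exit value) kills the outgoing ray entirely since it then lies outside $\DR$, so $\Lc\vf(\phi,2R)$, $\Tc\vf(\phi,2R)$, $\Mc\vf(\phi,2R)$ are pure diameter integrals — these are the longitudinal/transverse/mixed ray transforms along the diameter in direction $\vuu_\phi$, which again form an invertible system and thus give $\Ic\vf,\Jc\vf,\Kc^{(1)}\vf$ along all diameters. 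Wait — but the $d$-range is only $[0,R]$, so the outgoing ray is never fully outside $\DR$; instead I would exploit that as $d\to R$ from below together with all $\phi$, the diameters through the origin are covered twice (from both endpoints), and the derived ray-transform data along the chords through each break point, together with a Grönwall-type argument in $d$ on the ODE that $\partial_d$ of the V-line data satisfies, pins down the diameter values uniquely. The main obstacle I anticipate is exactly this: showing that the $d$-derivative identities, which couple the (known) outgoing-ray contribution to the (unknown) diameter values, can be solved — i.e. that the resulting integral/differential equation in the diameter variable is uniquely solvable on $[0,R]$ — and then checking that the break points for $d\in[0,R]$, $\phi\in[0,2\pi)$ really do cover $\DR$ with enough directions to apply the $2$-tensor inversion. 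Once these two points are secured, the rest is the routine reduction to Radon inversion.
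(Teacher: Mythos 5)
Your proposal does not close, and the point where it stalls is exactly the crux of the theorem. The reduction you describe produces, for each pair $(\phi,d)$ with $d\in[0,R]$, information about the tensor field along the \emph{half-line} $\{\vx_\phi+d\vuu_\phi+s\vv_\phi:\ s\ge 0\}$ only, and through each vertex there is exactly one outgoing direction $\vv_\phi$ (determined by $\phi$), not a full fan of directions. So even granting the differentiation step, you do not obtain the full longitudinal/transverse/mixed ray transforms over all lines meeting $\DR$; you obtain a two-parameter family of half-line integrals, and the passage from half-lines to full lines is precisely what is unavailable here. In Theorem \ref{th:full data recovery} that passage is made by pairing $BR(\phi,d)$ with $BR(\phi+\pi,2R-d)$, which requires $2R-d$ to be an admissible parameter; with $d$ restricted to $[0,R]$ this antipodal trick fails for $d<R$, and your proposal offers no substitute. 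The $\partial_d$ identity itself is also not what you claim: differentiating the outgoing integral in $d$ gives the ray integral of $\vuu_\phi\cdot\nabla\vf$, which (after splitting $\vuu_\phi$ into components along $\vv_\phi$ and $\vv_\phi^\perp$) yields a boundary term at the vertex plus a \emph{transverse derivative} of the half-line integral — an equation coupling the unknown pointwise values on the diameter to derivatives of other unknown data, not an isolated ray transform. You acknowledge this ("the main obstacle I anticipate is exactly this") and propose a Gr\"onwall-type argument, but no such argument is formulated, so the solvability of the resulting coupled system — which is the entire content of the theorem — is left open. Note also that the mechanism you attribute to the reference for the scalar case is not the one used there: that work (and the present paper) expands the data and the unknown in Fourier series in the angular variable, reduces each Fourier mode to a Mellin-convolution equation of the form $\int_t^R a_n(r)\,dr + \{[r\,a_n]\times h_n\}(t) = (\text{known data})$, and solves it explicitly by taking the Mellin transform, where the denominator $\frac{1}{s-1}+\Pc h_n(s-1)$ plays the role of the symbol whose nonvanishing guarantees unique solvability. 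That is a genuinely different, and complete, route; your geometric reduction would need the missing half-line-to-full-line step or an actual proof of unique solvability of your $d$-differentiated system to become a proof.
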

\noindent  {\textbf{Approach to establish the proof of Theorem \ref{th:partial data recovery}:} We expand the data ($\Lc \vf, \Tc\vf \ \&\ \Mc\vf$) and the unknown $m$-tensor field $\vf$ into their Fourier series and then try to express the Fourier coefficients of $\vf$ in terms of the Fourier coefficients of $\Lc \vf $, $\Tc \vf $ and $\Mc^{(k)} \vf $ where $1\leq k \leq m-1$. Finally using the  Mellin's transform, we prove that the Fourier coefficients of each  components of symmetric $2$-tensor field can be expressed in terms of known data. This will give the required series formula for recovery of the tensor field. This problem has been studied earlier in \cite{Ambarsoumian_2013} for the scalar case and in \cite{bhardwaj_2024} for the case of vector field in $\mathbb{R}^2$. 

\begin{remark}
    In Theorem \ref{th:full data recovery}, there is a restriction on the support of the symmetric $m$-tensor field $\vf$,  which depends on the fixed scattering angle $\theta$. This support condition arises due to the technique we are using to prove the theorem. For the case of $2$-tensor field, the result in  Theorem \ref{th:partial data recovery}, is  stronger  than that of Theorem \ref{th:full data recovery}, since it does not impose restrictions on the support of $\vf$ even after using the less data as compared to Theorem \ref{th:full data recovery}, in the sense that scalar $d$ varies in the half interval $[0, R]$ instead of $[0, 2R]$  in Theorem \ref{th:full data recovery}.
\end{remark}
%%%%%%%%%%%%%%%%%%%%%%%%%%%%%%%%%%%%%%%%%%%%%%%%%%%%%%%%%%%%%%%%%%%%%%%%%%%%%%%%%%%%%%%%%%%%%%%%%%%%%%%%%%%%%%%%%
\section{Proof of Theorem \ref{th:full data recovery} }\label{sec:full recovery th}
In this section, we show that the unknown symmetric $m$-tensor field $\vf$ can be uniquely determined from the knowledge of its  longitudinal, transverse, and mixed V-line transforms. Throughout this section  symmetric $m$-tensor field $\widetilde{\vf}$ denote the extension of  $\vf$  by zero outside of the disk $\mathbb{D}_{R\sin{\theta}}$ and we represent this extended symmetric $m$-tensor field $\widetilde{\vf}$ again by $\vf$.

\begin{proof}
Let us consider
\begin{align}\label{Sepration of Lf}
 \Lc\vf(\phi,d) = \underbrace{\int_{0}^{d}\left\langle \vf (\vx_\phi +s\vuu_\phi), \vuu_\phi^{m} \right\rangle \,ds}_{A_1} + \underbrace{\int_{0}^{\infty} \left\langle \vf (\vx_\phi + d\vuu_\phi + s\vuu_\phi), \vv_\phi^{m} \right\rangle \,ds,}_{A_2}\\
  \mathcal{T}\vf (\phi,d) = \int_{0}^{d} \left\langle\vf (\vx_\phi +s\vuu_\phi),(\vuu_\phi^\perp)^{m} \right\rangle\,ds + \int_{0}^{\infty}  \left\langle \vf(\vx_\phi + d\vuu_\phi + s\vv_\phi),(\vv_\phi^\perp)^{m}\right\rangle\,ds
  \end{align}
\text{and for} $1\leq k\leq m-1 $
 \begin{align}
     \mathcal{M}^{(k)}\vf (\phi,d) = \int_{0}^{d} \left\langle\vf (\vx_\phi +s\vuu_\phi),(\vuu_\phi^\perp)^{k}\vuu_\phi^{m-k} \right\rangle\,ds + \int_{0}^{\infty}  \left\langle \vf(\vx_\phi + d\vuu_\phi + s\vv_\phi), (\vv_\phi^\perp)^{k}\vv_\phi^{m-k}\right\rangle\,ds.  
 \end{align}
 
We start by observing that if we take $d =2R$, and using the fact that $\vf$ is zero outside of the disk $\DR$, then we have 
\begin{align}\label{eq:relation between VLT and regular RT}
    \Lc \vf(\phi, 2R) =  \Ic \vf(\phi + \pi/2, 0),\\
    \Tc \vf(\phi, 2R) = (-1)^{m} \Jc \vf(\phi + \pi/2, 0),\\
    \Mc^{(k)} \vf(\phi, 2R) =  (-1)^{k}\Kc^{(k)} \vf(\phi + \pi/2, 0).
\end{align}
Also
\begin{align}\label{Separation of Lf with d= 2R-d}
\Lc\vf(\phi + \pi,2R-d) &= \underbrace{\int_{0}^{2R - d}\left\langle \vf (\vx_{\phi + \pi} +s\vuu_{\phi + \pi}), \vuu_{\phi + \pi}^{m} \right\rangle \,ds}_{B_1} \nonumber \\
&\qquad + \underbrace{\int_{0}^{\infty}  \left\langle\vf(\vx_{\phi + \pi}  + (2R - d)\vuu_{\phi + \pi}  + s\vv_{\phi + \pi}),\vv_{\phi + \pi}^m\right\rangle \,ds}_{B_2}.
\end{align}
 Next we simplify the integrals given by $B_1$ and $B_2$. 
\begin{align*}
    B_1 &= \int_{0}^{2R - d}\left\langle \vf (\vx_{\phi + \pi} +s\vuu_{\phi + \pi}), \vuu_{\phi + \pi}^{m} \right\rangle \,ds\\
    &= \int_{0}^{2R - d}\left\langle\vf (\vx_{\phi + \pi} +2R \vuu_{\phi + \pi} - 2R \vuu_{\phi + \pi}  + s\vuu_{\phi + \pi} ), \vuu_{\phi + \pi}^{m}\right\rangle\,ds\\
     &= \int_{0}^{2R - d}\left\langle\vf (\vx_{\phi} + (s - 2R) \vuu_{\phi + \pi}), \vuu_{\phi + \pi}^{m}\right\rangle\,ds, \quad \mbox{ since } \vx_{\phi + \pi} +2R \vuu_{\phi + \pi} =  \vx_\phi\\
    &= \int_{-2R}^{- d}\left\langle\vf (\vx_{\phi} + s\vuu_{\phi + \pi}),\vuu_{\phi + \pi}^{m}\right\rangle\,ds \\
    &= (-1)^m\int_{-2R}^{- d}\left\langle\vf (\vx_{\phi} - s\vuu_{\phi}), \vuu_{\phi}^{m}\right\rangle\,ds, \quad \mbox{ since }  \vuu_{\phi + \pi} =  - \vuu_\phi\\
    &= (-1)^m\int_{d}^{2R}\left\langle\vf (\vx_{\phi} + s\vuu_{\phi}),\vuu_{\phi}^{m}\right\rangle\,ds.
\end{align*}
Hence
\begin{align}\label{Simplified expression for B1}   B_1= (-1)^m\int_{d}^{2R}\left\langle\vf (\vx_{\phi} + s\vuu_{\phi}),\vuu_{\phi}^{m}\right\rangle\,ds.
\end{align}
Now in order to complete the proof, we need to consider  $m$ is even and odd separately. 
\subsection*{For the case, when $m$ is even}
 Now after adding the integrals $A_1$ from equation \eqref{Sepration of Lf} and $B_1$ from equation \eqref{Simplified expression for B1}, and using the equation \eqref{eq:relation between VLT and regular RT}, we have 
\begin{align}\label{data st line passing through origin}
    A_1 + B_1 &= \int_{0}^{d}\left\langle\vf (\vx_\phi +s\vuu_\phi),\vuu_{\phi}^{m} \right\rangle\,ds + \int_{d}^{2R}\left\langle\vf (\vx_{\phi} + s\vuu_{\phi}),\vuu_{\phi}^{m}\right\rangle\,ds =  \int_{0}^{2R}\left\langle\vf (\vx_\phi +s\vuu_\phi),\vuu_{\phi}^{m} \right\rangle\,ds \nonumber\\
    &= \Ic \vf \left(\phi + \pi/2, 0\right) =  \Lc\vf(\phi, 2R). 
\end{align}
Using the similar arguments for   $A_2$ from equation \eqref{Sepration of Lf} and $B_2$ from equation \eqref{Separation of Lf with d= 2R-d}, we get the following identity: 
\begin{align}\label{data st line}
    A_2 + B_2 = \mathcal{I}\vf\left(\psi_\phi, t_d \right), \quad \mbox { with } \psi_{\phi} = \phi + \theta + \pi/2 \ \mbox { and } \  t_{d} = (R-d)\sin(\pi + \theta).
\end{align}
Now using the equations \eqref{Sepration of Lf}, \eqref{Separation of Lf with d= 2R-d}, \eqref{data st line passing through origin} and \eqref{data st line}, we have
\begin{align*}
\Lc\vf(\phi,d) +  \Lc\vf(\phi + \pi,2R-d) = \mathcal{I}\vf({\phi + \pi/2,0}) + \mathcal{I}\vf{{(\psi_{\phi},t_{d})}}
\end{align*}
which  implies that
\begin{align}\label{eq:LRT in terms of LVT}
\Ic\vf{{(\psi_{\phi},t_{d})}} = \Lc\vf(\phi,d) +  \Lc\vf(\phi + \pi,2R-d) - \Lc \vf(\phi, 2R).
\end{align}
 
Using the similar arguments as used for longitudinal case, we have the following  analogous relations for the transverse ray transform  $\Jc \vf$ and mixed ray transforms $\Kc^{(k)}\vf$ with that of transverse V-line transforms $\Tc \vf$ and mixed V-line transforms $\Mc^{(k)}\vf$, $k= 1,2,\cdots ,m-1$:
\begin{align}\label{eq:TRT in terms of TVT}
\Jc\vf(\psi_{\phi},t_{d}) = \Tc\vf(\phi,d) + \Tc\vf(\phi + \pi,2R-d) - \Tc\vf({\phi + \pi/2,0}),
\end{align}
and
\begin{align}\label{eq:MRT in terms of MVT}
(-1)^{k}\Kc^{(k)}\vf(\psi_{\phi},t_{d}) = \Mc^{(k)}\vf(\phi,d) + \Mc^{(k)}\vf(\phi + \pi,2R-d) - (-1)^{k}\Mc^{(k)}\vf({\phi + \pi/2,0}).
\end{align}
\subsection*{For the case, when $m$ is odd}
 By using a similar type of analysis as for the case, when $m$ is even, we have 
\begin{align}\label{odd case}
    \begin{cases}
     \Ic\vf{{(\psi_{\phi},t_{d})}} = \Lc\vf(\phi,d) -  \Lc\vf(\phi + \pi,2R-d) - \Lc \vf(\phi, 2R),\\
     \Jc\vf(\psi_{\phi},t_{d}) = -\Tc\vf(\phi,d) + \Tc\vf(\phi + \pi,2R-d) - \Tc\vf({\phi + \pi/2,0}), \\
    \Kc^{(k)}\vf(\psi_{\phi},t_{d}) = (-1)^{k}\Mc^{(k)}\vf(\phi,d) - (-1)^{k}\Mc^{(k)}\vf(\phi + \pi,2R-d) - \Mc^{(k)}\vf({\phi + \pi/2,0}). 
    \end{cases}
\end{align}
Now for each $m\in \mathbb{N}$, we notice that the right-hand side of the above relations given in equations \eqref{eq:LRT in terms of LVT}, \eqref{eq:TRT in terms of TVT}, \eqref{eq:MRT in terms of MVT} and \eqref{odd case} are completely known in terms of the provided V-line data, hence the left-hand sides of these relations which represent the ray transforms, namely the longitudinal, transverse, and mixed ray transform of $\vf$, along the straight lines defined by the parameters $(\psi_\phi, t_d)$ are also known. Using \eqref{data st line} (see \ref{fig: def of broken ray}), we can easily observe that we have no data outside of the disk $\mathbb{D}_{R\sin \theta}$. Therefore, by varying the parameter $(\psi_\phi, t_d)$, we have the information about the longitudinal, transverse and mixed ray transforms of $\vf$ along every line passing through $\mathbb{D}_{R\sin \theta}.$ From the knowledge of these longitudinal, mixed, and transverse ray transforms of $\vf$, we give an inversion formula for each components of $\vf$ by following the techniques  used in \cite[Section 5.2]{Derevtsov_2015}.

As mentioned before, we have longitudinal, transverse, and mixed ray transforms of $\vf$ are known. To begin, we rewrite this data in terms of the  Radon transforms of the components of $\vf$:
\begin{align}\label{system}
\begin{cases}
\Ic \vf(\psi_\phi, t_d) = (w^{\perp})^{i_1}\cdots(w^{\perp})^{i_{m-1}}(w^{\perp})^{i_m}\Rc f_{i_{1}i_{2}\cdots i_{m}}(\psi_\phi, t_d),\\
  \Kc^2 \vf(\psi_\phi, t_d) = (w^{\perp})^{i_1}\cdots (w^{\perp})^{i_{m-1}}w^{i_m}\Rc f_{i_{1}i_{2}\cdots i_{m}}(\psi_\phi, t_d),\\
  \cdots \hspace{.6cm}\cdots \hspace{.6cm}\cdots\\
  \Kc^{(m-1)} \vf(\psi_\phi, t_d) = (w^{\perp})^{i_1}w^{i_2}\cdots w^{i_{m-1}}w^{i_m}\Rc f_{i_{1}i_{2}\cdots i_{m}}(\psi_\phi, t_d),\\
  \Jc\vf(\psi_\phi, t_d) = w^{i_1}\cdots w^{i_{m-1}}w^{i_m}\Rc f_{i_{1}i_{2}\cdots i_{m}}(\psi_\phi, t_d).
  \end{cases}
\end{align}
We can write this system of equations \eqref{system} in the matrix form $Y = A X$, where the matrix $A$ is
{\footnotesize\begin{align*}
   \begin{bmatrix}
      \left((w^{\perp})^{1}\cdots(w^{\perp})^{1}(w^{\perp})^{1}\right) & m\left((w^{\perp})^{1}\cdots(w^{\perp})^{1}(w^{\perp})^{2}\right)& \cdots &\left((w^{\perp})^{2}\cdots(w^{\perp})^{2}(w^{\perp})^{2}\right)\\
      \left((w^{\perp})^{1}\cdots(w^{\perp})^{1}(w)^{1}\right) & \left((w^{\perp})^{1}\cdots(w^{\perp})^{1}(w)^{2} + (m-1)(w^{\perp})^{1}\cdots(w^{\perp})^{2}(w)^{1}\right)& \cdots &\left((w^{\perp})^{2}\cdots(w^{\perp})^{2}(w)^{2}\right)\\
      \cdots & \cdots & \cdots & \cdots \\
      \left((w^{\perp})^{1}\cdots(w)^{1}(w)^{1}\right) & \left((w^{\perp})^{2}\cdots(w)^{1}(w)^{1}+(m-1)(w^{\perp})^{1}\cdots(w)^{1}(w)^{2} \right)& \cdots &\left((w^{\perp})^{2}\cdots(w)^{2}(w)^{2}\right)\\
      \left((w)^{1}\cdots(w)^{1}(w)^{1}\right) & m\left((w)^{1}\cdots(w)^{1}(w)^{2}\right)& \cdots &\left((w)^{2}\cdots(w)^{2}(w)^{2}\right)
  \end{bmatrix},
\end{align*}}
$A$ is the $(m+1)\times (m+1)$ order matrix and
\begin{align*}
  Y :=  \begin{bmatrix}
        \Ic \vf(\psi_\phi, t_d) \\
        \Kc^2 \vf(\psi_\phi, t_d)\\
        \cdots\\
        \Kc^{(m-1)} \vf(\psi_\phi, t_d)\\
        \Jc\vf(\psi_\phi, t_d)
    \end{bmatrix}_{(m+1)\times 1} 
    \quad 
    X :=   \begin{bmatrix}
        \Rc f_{1\cdots 11}(\psi_\phi, t_d)\\
        \Rc f_{1\cdots 12}(\psi_\phi, t_d)\\
        \cdots \\
        \Rc f_{12\cdots 22}(\psi_\phi, t_d)\\
        \Rc f_{22\cdots 22}(\psi_\phi, t_d)
          \end{bmatrix}_{(m+1)\times 1}.
\end{align*}
Now from \cite{Derevtsov_2015}, we have that $A^2 = I_{(m+1)\times (m+1)}$, where $I_{(m+1)\times (m+1)}$ is the identity matrix of order $(m+1)\times (m+1)$, hence  the matrix $A$ is invertible and   the system \eqref{system} has a unique solution for
$X$.
% hence {\color{red} Radon transform of each components of tensor fields are known} 
% \begin{equation}\label{eq: Radon transforms of components}  
% \begin{aligned}
% \begin{cases}
% \Rc f_{1\cdots11}(\psi_\phi, t_d) &= (w^{\perp})^{i_1}\cdots (w^{\perp})^{i_{m-1}}(w^{\perp})^{i_m}\Kc \vf_{i_{1}i_{2}\cdots i_{m}}(\psi_\phi, t_d),\\
%   \Rc f_{1\cdots 12}(\psi_\phi, t_d) &= (w^{\perp})^{i_1}\cdots (w^{\perp})^{i_{m-1}}w^{i_m}\Kc \vf_{i_{1}i_{2}\cdots i_{m}}(\psi_\phi, t_d),\\
%   \cdots \hspace{.6cm}\cdots  \hspace{.6cm}\cdots \\
%   \Rc\vf_{2\cdots 22}(\psi_\phi, t_d) &= w^{i_1}\cdots w^{i_{m-1}}w^{i_m}\Kc \vf_{i_{1}i_{2}\cdots i_{m}}(\psi_\phi, t_d),
%   \end{cases}
% \end{aligned}
% \end{equation}
% where $ \Kc \vf_{i_{1}i_{2}\cdots i_{m}} = \Kc^{(k-1)}\vf_{i_{1}i_{2}\cdots i_{m}}$, $k = 1,2,\cdots ,m+1 $ if and only if when exactly $m-(k-1)$ indices are equal to $1$ and the other $k-1$ indices are equal to $2$. 
Thus, we have that the  Radon transform of each component of the $m$-tensor field, 
   is known and from the inversion formula for the Radon transform for functions we have the following explicit formula for each components of $m$-tensor field:
   \begin{align}\label{eq:Radon's inversion formula}
     f_{i_{1}i_{2}\cdots i_{m}}(\vx) = \frac{1}{2\pi} \left(-\Delta\right)^{1/2} \int_0^{2\pi}\Rc f_{i_{1}i_{2}\cdots i_{m}}((\cos \alpha, \sin \alpha), x \cos \alpha + y \sin \alpha) d\alpha, 
\end{align}
% \begin{align}\label{eq:Radon's inversion formula}
%     {\color{red} \text{replace by components of} \vf} g(\vx) = \frac{1}{2\pi} \left(-\Delta\right)^{1/2} \int_0^{2\pi}\Rc g((\cos \alpha, \sin \alpha), x \cos \alpha + y \sin \alpha) d\alpha, 
% \end{align} 
where the operator $(-\Delta)^{1/2}$ is defined with the help of the following property of Fourier transform, ${(-\Delta f)}^{\wedge}(\xi) = |\xi|^{2}\widehat{f}(\xi).$ Therefore, by using the inversion formula of the Radon transform \eqref{eq:Radon's inversion formula}, we obtain the inversion formula for  the symmetric $m$-tensor field $\vf$.  This completes the proof of the theorem. 
\end{proof}
 
%%%%%%%%%%%%%%%%%%%%%%%%%%%%%%%%%%%%%%%%%%%%%%%%%%%%%%%%%%%%%%%%%%%%%%%%%%%%%%%%%%%%%%%%%%%%%%%%%%%%%%%%%%%%%%%%%%%%%%%%%%%%%%%%%%%%%%%%%%%%%%%%%%%%%%%%%%%%%%%%%%%%%%%%%%%%%%%%%%%%%%%%%%%%%%%%%%%%%%%%%%%%%%%%%%%%%%%%%%%%%%%%%%%%%%%%%%%%%%%%%%%%%%%%%%%%%%%%%%%%%%%%%%%%%%%%%%%%%%%%%%%%%%%%%%%%%%%%%%%
\section{Proof of Theorem \ref{th:partial data recovery}}\label{sec:partial data case}
In this section, we give   the series formula that allows us to obtain the symmetric $2$-tensor field from the mixed, longitudinal, and transverse V-line transforms. We start with a definition and some properties of the Mellin transform, required to give the series formula for the components of the tensor fields which is the aim of   Theorem \ref{th:partial data recovery}.  
\begin{definition}[\cite{Flajolet_1995}]  The \textbf{Mellin transform} for a complex-valued integrable function $f$ that decays at infinity, denoted by $\mathcal{P}f$, is the function of complex variable $s$ and given by 
    \begin{equation*}
      \mathcal{P}f(s) := \int_{0}^{\infty} p^{s-1}f(p) \,dp.
  \end{equation*}
\end{definition}  
  The  Mellin transform satisfies the following identities:
\begin{enumerate}
\item $\displaystyle \mathcal{P}\left[r^k f(r)\right](s) = \mathcal{P}f(s+k)\label{p1}$ 
\item $\displaystyle \mathcal{P}\left[\int_{t}^{\infty}f(x)\,dx\right](s) = \frac{\mathcal{P}f(s+1)}{s}\label{p2} $ 
\end{enumerate}
\begin{figure}[H]
\centering
\begin{subfigure}[c]{0.46\textwidth}
\centering
\includegraphics[width=\textwidth]{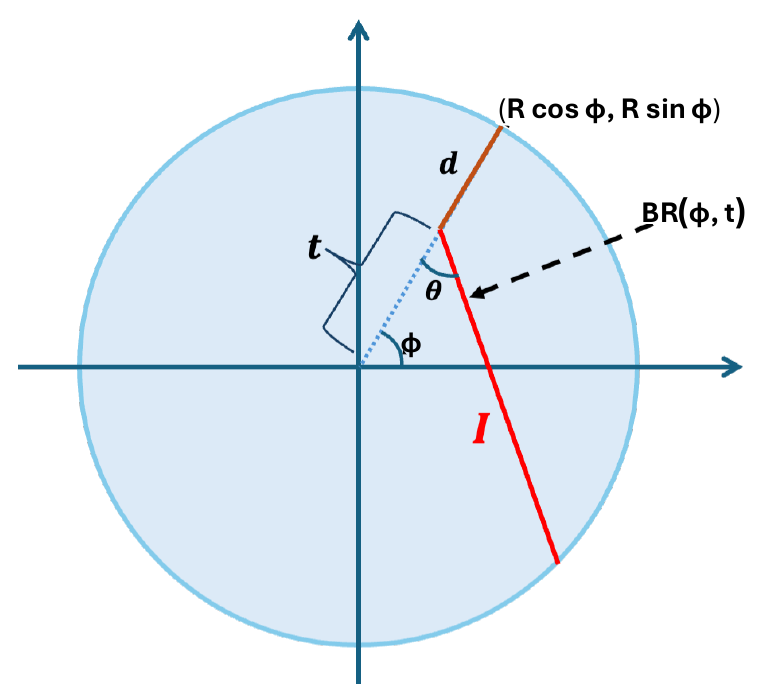}\caption{A sketch of modified parametrization of the broken line $B(\phi, t)$ with $ t = R- d$.}\label{fig: B(beta, t)}
\end{subfigure}
\hfill
\begin{subfigure}[c]{0.49\textwidth}
\centering
\includegraphics[width=\textwidth]{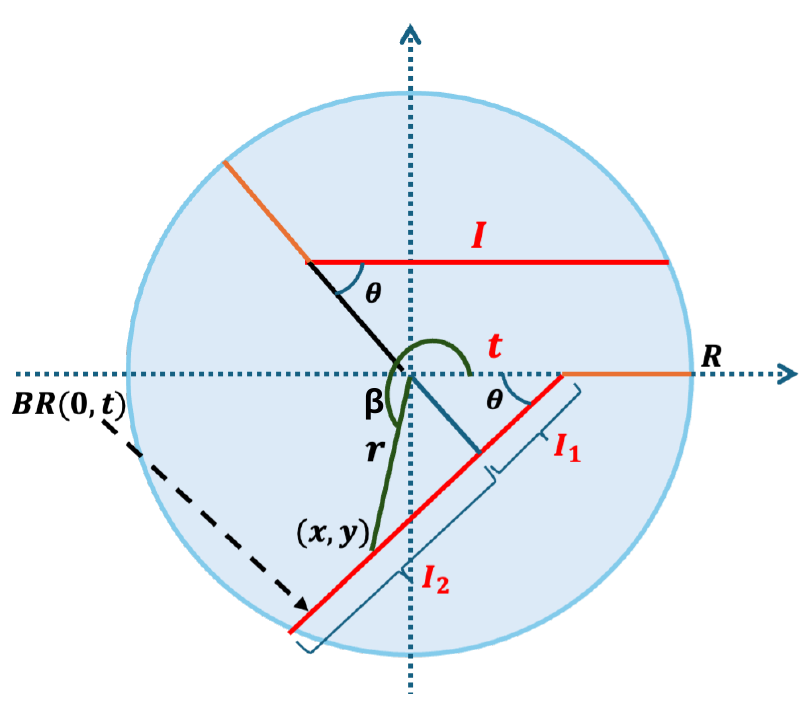}\caption{Here $B(0, t)$ is the broken line obtained by rotating $B(\phi, t)$ clockwise by an angle $\phi$.}\label{fig: polar coordinates}
\end{subfigure}
\end{figure}

In order to simplify the upcoming calculations, we reparametrize the broken lines  described by two parameters: $\phi\in [0,2\pi)$ and $d\in [0,R]$ to 
 $\phi\in [0,2\pi)$ and the  new parameter $t = R - d$  where $d$ denotes the distance travelled along the diameter before scattering.  In other words, broken rays are now parameterized by the ordered pair $(\phi, t)$ as shown in \ref{fig: B(beta, t)}.

Let us denote  
\begin{align*}
    \vf(\beta,r) :=  \begin{bmatrix}
        f_{11}(\beta,r) & f_{12}(\beta,r)\\
        f_{12}(\beta,r) & f_{22}(\beta,r)
    \end{bmatrix}   
\end{align*} the unknown symmetric $2$-tensor field $\vf$  in polar coordinates and  \[\Fc(\phi, t):= \mathcal{L}\vf(\phi, t),\ \Gc(\phi, t):= \mathcal{T}\vf(\phi, t), \ \mbox{and}\ \Hc(\phi, t):= \mathcal{M}\vf(\phi, t).  \] Then the Fourier series of $f_{11}(\beta,r)$, $f_{12}(\beta,r)$, $f_{22}(\beta,r)$, $\Fc(\phi, t)$,  $\Gc(\phi, t)$ and $\Hc(\phi, t)$ are given by:
\begin{align}
     f_{11}(\beta,r) = \sum_{n=-\infty}^{\infty} a_n(r)e^{i n\beta}, \quad \mbox{ with } a_n(r) = \frac{1}{2\pi}\int_{0}^{2\pi} f_{11}(\beta,r)e^{-in\beta}\,d\beta\label{eq:Fourier series of f11} \\
     f_{12}(\beta,r) = \sum_{n=-\infty}^{\infty} b_n(r)e^{i n\beta}, \ \quad \mbox{ with }  b_n(r) = \frac{1}{2\pi}\int_{0}^{2\pi} f_{12}(\beta,r)e^{-in\beta}\,d\beta\label{eq:Fourier series of f12} \\
     f_{22}(\beta,r) = \sum_{n=-\infty}^{\infty} c_n(r)e^{i n\beta}, \ \quad \mbox{ with }  c_n(r) = \frac{1}{2\pi}\int_{0}^{2\pi} f_{22}(\beta,r)e^{-in\beta}\,d\beta\label{eq:Fourier series of f22} \\
    \Fc(\phi, t) = \sum_{n=-\infty}^{\infty} F_n(t) e^{in\phi}, \ \ \quad \mbox{ with } F_n(t) = \frac{1}{2\pi}\int_{0}^{2\pi} \Fc(\phi, t)e^{-in\phi}\,d\phi \label{eq:Fourier series of LVT}\\
    \Gc(\phi, t) = \sum_{n=-\infty}^{\infty} G_n(t) e^{in\phi}, \ \ \quad \mbox{ with } G_n(t) = \frac{1}{2\pi}\int_{0}^{2\pi} \Gc(\phi, t)e^{-in\phi}\,d\phi \label{eq:Fourier series of TVT} \\
     \Hc (\phi, t) = \sum_{n=-\infty}^{\infty} H_n(t) e^{in\phi}, \ \ \quad \mbox{ with } H_n(t) = \frac{1}{2\pi}\int_{0}^{2\pi} \Hc(\phi, t)e^{-in\phi}\,d\phi \label{eq:Fourier series of MVT}.
\end{align}
 
Firstly, we show that the Fourier coefficients of each component of the $2$-tensor field can be expressed in terms of the Fourier coefficients of the longitudinal, transverse and mixed V-line transforms and using the inversion formula for Mellin-transform, we conclude the proof of  main Theorem \ref{th: Melin transforms for Fourier coefficients}. In the next proposition, we prove the relation between the Fourier coefficients of $2$-tensor fields in terms of the Fourier coefficients of the given data. 
\begin{prop} If $F_n$, $G_n$, and $H_n$ are the Fourier coefficients of the Fourier series as determined above for the longitudinal, transverse and mixed V-line respectively, and $a_n$, $b_n$, and $c_n$ are the $n$th Fourier coefficients of the  components $f_{11},f_{12} $ and $f_{22}$ of the $2$-tensor field $\vf$ respectively. We have the following relations holds: 
    \begin{align}\label{eq:relation between a_n, F_n, G_n and H_n}
   \left( 2 F_n(t) + F_{n+2}(t) +F_{n-2}(t)\right) +& \left( 2 G_n(t) - G_{n+2}(t) - G_{n-2}(t)\right) +2i \left(  H_{n-2}(t)  - H_{n+2}(t)\right) \nonumber\\ &= 4 i\left( \int_{t}^{R} a_{n}(r) e^{in\beta}\,ds  + \int_I a_{n}(r) e^{2i\theta} e^{in\beta}\,ds\right),
\end{align}

\begin{align}\label{eq:relation between b_n, F_n, G_n and H_n}
   \left(  F_{n-2}(t) - F_{n+2}(t)\right) - & \left(  G_{n-2}(t) - G_{n+2}(t)\right) +2i \left(  H_{n-2}(t)  + H_{n+2}(t)\right) \nonumber\\ &= 4 i \left(\int_{t}^{R} b_{n}(r) e^{in\beta}\,ds  + \int_I b_{n}(r) e^{2i\theta} e^{in\beta}\,ds\right)
\end{align}
and
\begin{align}\label{eq:relation between c_n, F_n, G_n and H_n}
   \left( 2 F_{n}(t)- F_{n-2}(t) - F_{n+2}(t)\right) +& \left(  2 G_n(t)+ G_{n-2}(t) + G_{n+2}(t)\right) +2i \left(  H_{n+2}(t)  - H_{n-2}(t)\right) \nonumber\\ &= 4  \left(\int_{t}^{R} c_{n}(r) e^{in\beta}\,ds  + \int_I c_{n}(r) e^{2i\theta} e^{in\beta}\,ds\right).
\end{align}
\end{prop}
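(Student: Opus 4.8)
I would prove all three identities by the same ``polarization $+$ Fourier'' scheme, specialised to $m=2$. First I decouple the tensor structure: with $\vuu_\phi=-(\cos\phi,\sin\phi)$, $\vuu_\phi^\perp=(\sin\phi,-\cos\phi)$ a short computation gives
\begin{align*}
\langle\vf,\vuu_\phi^{2}\rangle+\langle\vf,(\vuu_\phi^\perp)^{2}\rangle &= f_{11}+f_{22},\\
\langle\vf,\vuu_\phi^{2}\rangle-\langle\vf,(\vuu_\phi^\perp)^{2}\rangle\mp 2i\langle\vf,\vuu_\phi^\perp\vuu_\phi\rangle &= 2\Big(\tfrac{f_{11}-f_{22}}{2}\mp if_{12}\Big)e^{\pm 2i\phi},
\end{align*}
together with the same three identities with $(\vuu_\phi,\vuu_\phi^\perp,\phi)$ replaced by $(\vv_\phi,\vv_\phi^\perp,\theta+\phi)$. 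Hence the combinations $\Lc\vf+\Tc\vf$ and $\Lc\vf-\Tc\vf\mp 2i\Mc\vf$ are, on each leg of the broken line, line integrals of the single scalars $\tfrac{f_{11}+f_{22}}{2}$ and $\tfrac{f_{11}-f_{22}}{2}\mp i f_{12}$ weighted by the angular factors $1$ and $e^{\pm2i(\cdot)}$; this is the polarization that will ultimately isolate $a_n,b_n,c_n$.

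Next I parametrise the broken line $B(\phi,t)$ in polar coordinates. On the first leg the running point $\vx_\phi+s\vuu_\phi$ has polar coordinates $(\phi,R-s)$, so after $r=R-s$ the first-leg integral becomes $\int_t^R(\,\cdot\,)(\phi,r)\,dr$. On the scattered leg, identifying $\mathbb{R}^2\cong\mathbb{C}$, the vertex is $te^{i\phi}$ and $\vv_\phi\leftrightarrow-e^{i(\theta+\phi)}$, so the running point equals $e^{i\phi}\bigl(t-se^{i\theta}\bigr)$; thus it has polar radius $r(s)=|t-se^{i\theta}|=\sqrt{t^{2}-2st\cos\theta+s^{2}}$ and polar angle $\phi+\mu(s)$ with $\mu(s)=\arg\bigl(t-se^{i\theta}\bigr)$, and one verifies $\mu'(s)=-t\sin\theta/r(s)^{2}<0$ and $r(s)=t\sin\theta/\sin(\theta-\mu(s))$. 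The radius decreases to its minimum $t\sin\theta$ at $s=t\cos\theta$ and then increases, leaving $\DR$ at $s_{1}=t\cos\theta+\sqrt{R^{2}-t^{2}\sin^{2}\theta}$, so the scattered-leg integral runs over an interval $I=[0,s_1]$ (the point of closest approach splitting $I$ into two monotone branches once one later passes to the variable $r$).

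Finally I take Fourier coefficients in $\phi$. An angular weight $e^{ik\phi}$ on the first leg merely shifts the index; on the scattered leg, since the polar angle is $\phi+\mu(s)$ and $\mu(s)$ is independent of $\phi$, the substitution $\beta=\phi+\mu(s)$ gives $\tfrac1{2\pi}\int_0^{2\pi}h(\phi+\mu(s),r(s))e^{-in\phi}\,d\phi=e^{in\mu(s)}\,\widehat h_n(r(s))$ for the $n$th angular Fourier coefficient $\widehat h_n$. Using $\widehat{(\tfrac{f_{11}+f_{22}}{2})}_n=\tfrac{a_n+c_n}{2}$ and $\widehat{(\tfrac{f_{11}-f_{22}}{2}\mp if_{12})}_n=\tfrac{a_n-c_n}{2}\mp ib_n$, the previous two steps produce closed expressions for $F_n+G_n$, $F_n-G_n-2iH_n$ and $F_n-G_n+2iH_n$, each a V-line-type integral along $B(\phi,t)$ of $\tfrac{a_n+c_n}{2}$, $\tfrac{a_{n-2}-c_{n-2}}{2}-ib_{n-2}$, $\tfrac{a_{n+2}-c_{n+2}}{2}+ib_{n+2}$ respectively, the scattered-leg piece carrying an extra factor $1$, $e^{2i\theta}$, $e^{-2i\theta}$. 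Then I observe that the left-hand side of \eqref{eq:relation between a_n, F_n, G_n and H_n} equals $2(F_n+G_n)+(F_{n+2}-G_{n+2}-2iH_{n+2})+(F_{n-2}-G_{n-2}+2iH_{n-2})$, and likewise that the left-hand sides of \eqref{eq:relation between b_n, F_n, G_n and H_n} and \eqref{eq:relation between c_n, F_n, G_n and H_n} are the corresponding difference and linear combination of these same three blocks at the shifted indices $n,n\pm2$; substituting the closed forms, the $\int_t^R$ contributions telescope onto a multiple of $\int_t^R a_n$ (resp.\ $b_n$, $c_n$), while the scattered-leg contributions, gathered with $1\pm\cos2\theta=2\cos^2\theta$ or $2\sin^2\theta$ and $e^{2i\theta}-e^{-2i\theta}=2i\sin2\theta$, assemble into the stated $\int_I$ term. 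These relations are exactly what is fed into the subsequent Mellin-transform step.

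The only genuinely delicate point is the polar parametrisation of the scattered leg: getting $r(s)$ and the angular shift $\mu(s)$ right, recording the point of closest approach $t\sin\theta$ (hence the interval $I$), and then tracking the phases $e^{\pm2i\theta}$ correctly when the three building blocks are recombined at the indices $n$ and $n\pm2$. Everything after that is bookkeeping with elementary trigonometric identities.
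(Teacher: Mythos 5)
Your route is essentially the paper's: expand the data and the components of $\vf$ in Fourier series in the angular variable, convert the $\cos$/$\sin$ weights to exponentials (your ``polarization'' identities are exactly this step, just organized up front), extract $F_n,G_n,H_n$, and recombine the blocks $F_n+G_n$ and $F_{n\pm 2}-G_{n\pm 2}\mp 2iH_{n\pm 2}$. Your polar description of the scattered leg (radius $r(s)=|t-se^{i\theta}|$, closest approach $t\sin\theta$, a $\phi$-independent angular shift $\mu(s)$, exit at $s_1$) is also exactly what the paper uses, and your decomposition of each left-hand side into the three blocks at indices $n,n\pm2$ is the right one.

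The gap is in your final assembly sentence. With your (correct) scattered-leg weights --- $1$ for the trace block $F_n+G_n$, $e^{2i\theta}$ for the block with integrand $a_{n}-c_{n}-2ib_{n}$ (i.e.\ $F_{n+2}-G_{n+2}-2iH_{n+2}$), and $e^{-2i\theta}$ for the block with integrand $a_{n}-c_{n}+2ib_{n}$ (i.e.\ $F_{n-2}-G_{n-2}+2iH_{n-2}$) --- the first-leg contributions do telescope to $4\int_t^R a_n\,dr$, but the scattered-leg contribution of $2(F_n+G_n)+(F_{n+2}-G_{n+2}-2iH_{n+2})+(F_{n-2}-G_{n-2}+2iH_{n-2})$ is $\int_I\bigl[2(a_n+c_n)+(a_n-c_n-2ib_n)e^{2i\theta}+(a_n-c_n+2ib_n)e^{-2i\theta}\bigr]e^{in\beta}\,ds = 4\int_I\bigl[a_n\cos^2\theta+c_n\sin^2\theta+b_n\sin 2\theta\bigr]e^{in\beta}\,ds$, which is \emph{not} $4\int_I a_n e^{2i\theta}e^{in\beta}\,ds$; the identities $1\pm\cos 2\theta=2\cos^2\theta,\,2\sin^2\theta$ that you invoke produce precisely this mixed combination and do not collapse it to a single coefficient. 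The same failure occurs for the $b_n$ and $c_n$ relations. Be aware that the paper's own proof reaches the stated form only by attaching the weight $e^{2i\theta}$ to \emph{all three} blocks, including $F_n+G_n$; but since $\vv_\phi^2+(\vv_\phi^\perp)^2$ is the identity tensor, $\Lc\vf+\Tc\vf$ is the broken-ray transform of the scalar $f_{11}+f_{22}$ and its scattered-leg weight must be $1$, so your phase bookkeeping is the correct one and the displayed right-hand sides cannot be obtained as stated: the scattered-leg integrals necessarily couple $a_n$, $b_n$, $c_n$. To complete the argument you must either carry these coupled $\int_I$ terms into the Mellin step (which then becomes a $3\times 3$ system rather than three scalar divisions) or restate the proposition with the corrected right-hand sides; the one-line claim that everything ``assembles into the stated $\int_I$ term'' is the step that fails.
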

\begin{proof}
Let $\widetilde{\Theta}(\phi, t)$ denote the unit vector field along the broken ray $BR(\phi,t)$. More precisely,   $\widetilde\Theta(\phi, t) = \vuu_\phi = -(\cos \phi, \sin \phi)$ for the first section of the broken ray that starts from the boundary of the disk $\partial\mathbb{D}_R$ and $\widetilde{\Theta}(\phi, t) = \vv_\phi = -(\cos (\theta + \phi), \sin (\theta +\phi))$ along the second section of the broken ray.
We have 
\begin{align*}
\Fc(\phi,t) &=\Lc \vf(\phi,t)= \int_{BR(\phi,t)} \left\langle \vf(\beta,r),\widetilde{\Theta}_{\phi}^{2}\right\rangle\,ds
= \int_{BR(0,t)} \left\langle\vf\left(\beta+\phi,r \right), \widetilde{\Theta}_{\phi}^{2}\right\rangle\,ds\\
&= \int_t^R  \left\langle \vf(\phi,r),{\vuu}_{\phi}^{2}\right\rangle\,dr  + \int_I \left\langle \vf(\beta + \phi ,r),{\vv}_{\phi}^{2}\right\rangle\,ds.
\end{align*}
In the above expression, the first integral is along the $x$-axis (hence the angle $\beta= 0$ and $dr$ be the length measure along the corresponding line), and the second integral is along the other section of the broken ray $B(0, t)$. 
% &= \int_{BR(0,t)} \sum_{n=-\infty}^{\infty}\left(a_n(r),b_n(r)\right)\cdot{\Theta}e^{in(\phi+\phi)}\,ds\\
\begin{align*}
 &\Fc(\phi,t)  = \sum_{n=-\infty}^{\infty}  \int_{t}^{R} \left(a_n(r)u_{1}^{2} + 2b_n(r)u_{1}u_{2} + c_{n}(r)u_{2}^{2}\right)e^{in \phi}\,dr \\ 
& + \sum_{n=-\infty}^{\infty}\int_{I} \left(a_n(r)v_{1}^{2} + 2b_n(r)v_{1}v_{2} + c_{n}(r)v_{2}^{2}\right) {e}^{in(\beta + \phi)}\,ds\\
&= \sum_{n=-\infty}^{\infty}  \int_{t}^{R} \left(a_n(r)\cos^{2}{\phi} + 2b_n(r)\cos{\phi}\sin{\phi} + c_{n}(r)\sin^{2}{\phi}\right)e^{in \phi}\,dr \\ 
& + \sum_{n=-\infty}^{\infty}\int_{I} \left(a_n(r)\cos^{2}{(\phi + \theta)} + 2b_n(r)\cos{(\phi + \theta)}\sin{(\phi + \theta)} + c_{n}(r)\sin^{2}{(\phi + \theta)}\right) {e}^{in(\beta + \phi)}\,ds.
\end{align*}
Now, putting the value of $\displaystyle \cos \theta = \frac{e^{i\theta} + e^{-i\theta}}{2}$ and  $\displaystyle \sin \theta = \frac{e^{i\theta} - e^{-i\theta}}{2i} $ in the above expression of $\Fc(\phi, t)$, and after simplification, we have
\begin{align*}
4\Fc(\phi, t) &=\sum_{n=-\infty}^{\infty}  \int_{t}^{R} \left\{a_n(r) - c_n(r) - 2i b_n(r)\right\}e^{in\beta}e^{i(n+2) \phi}\,dr \\ 
& \quad + \sum_{n=-\infty}^{\infty}  \int_{t}^{R} \left\{a_n(r) - c_n(r) +2i b_n(r)\right\}e^{in\beta}e^{i(n-2) \phi}\,dr \\ 
& \quad + 2\sum_{n=-\infty}^{\infty}  \int_{t}^{R} \left\{a_n(r) + c_n(r) \right\}e^{in\beta}e^{in \phi}\,dr \\ 
& \quad + \sum_{n=-\infty}^{\infty}  \int_I \left\{a_n(r) - c_n(r) - 2i b_n(r)\right\}e^{2i\theta}e^{in\beta}e^{i(n+2) \phi}\,dr \\ 
& \quad + \sum_{n=-\infty}^{\infty}  \int_I \left\{a_n(r) - c_n(r) +2i b_n(r)\right\}e^{2i\theta}e^{in\beta}e^{i(n-2) \phi}\,dr \\ 
& \quad + 2\sum_{n=-\infty}^{\infty}  \int_I \left\{a_n(r) + c_n(r) \right\}e^{2i\theta}e^{in\beta}e^{in \phi}\,dr\\
&=\sum_{n=-\infty}^{\infty}  \int_{t}^{R} \left\{a_{n-2}(r) - c_{n-2}(r) - 2i b_{n-2}(r)\right\}e^{i(n-2)\beta}e^{in \phi}\,dr \\ 
& \quad + \sum_{n=-\infty}^{\infty}  \int_{t}^{R} \left\{a_{n+2}(r) - c_{n+2}(r) +2i b_{n+2}(r)\right\}e^{i(n+2)\beta}e^{in \phi}\,dr \\ 
& \quad + 2\sum_{n=-\infty}^{\infty}  \int_{t}^{R} \left\{a_n(r) + c_n(r) \right\}e^{in\beta}e^{in \phi}\,dr \\ 
& \quad + \sum_{n=-\infty}^{\infty}  \int_I \left\{a_{n-2}(r) - c_{n-2}(r) - 2i b_{n-2}(r)\right\}e^{2i\theta}e^{i(n-2)\beta}e^{in \phi}\,dr \\ 
& \quad + \sum_{n=-\infty}^{\infty}  \int_I \left\{a_{n+2}(r) - c_{n+2}(r) +2i b_{n+2}(r)\right\}e^{2i\theta}e^{i(n+2)\beta}e^{in \phi}\,dr \\ 
& \quad + 2\sum_{n=-\infty}^{\infty}  \int_I \left\{a_n(r) + c_n(r) \right\}e^{2i\theta}e^{in\beta}e^{in \phi}\,dr.
\end{align*}
By comparing the previous relation with the Fourier series $\displaystyle \Fc(\phi, t)= \sum_{n=-\infty}^{\infty} F_n(t) e^{in\phi}$, we obtain the following expression  
 \begin{align}\label{eq:F_n}
4 F_n(t)   &=  \int_{t}^{R} \left\{a_{n-2}(r) - c_{n-2}(r) - 2i b_{n-2}(r)\right\}e^{i(n-2)\beta}\,dr \nonumber\\ 
& \quad +   \int_{t}^{R} \left\{a_{n+2}(r) - c_{n+2}(r) +2i b_{n+2}(r)\right\}e^{i(n+2)\beta}\,dr \nonumber \\ 
& \quad + 2  \int_{t}^{R} \left\{a_n(r) + c_n(r) \right\}e^{in\beta}\,dr \nonumber\\ 
& \quad +  \int_I \left\{a_{n-2}(r) - c_{n-2}(r) - 2i b_{n-2}(r)\right\}e^{2i\theta}e^{i(n-2)\beta}\,dr \nonumber\\ 
& \quad +   \int_I \left\{a_{n+2}(r) - c_{n+2}(r) +2i b_{n+2}(r)\right\}e^{2i\theta}e^{i(n+2)\beta}\,dr \nonumber\\ 
& \quad + 2  \int_I \left\{a_n(r) + c_n(r) \right\}e^{2i\theta}e^{in\beta}\,dr.
 \end{align}
By repeating similar type of calculations and comparing the coefficients of the Fourier series of  $\displaystyle \Gc(\phi, t)= \sum_{n=-\infty}^{\infty} G_n(t) e^{in\phi}$ and $\displaystyle \Hc(\phi, t)= \sum_{n=-\infty}^{\infty} H_n(t) e^{in\phi}$, we have the following identities:
 \begin{align}\label{eq:G_n}
4 G_n(t)   &=  -\int_{t}^{R} \left\{a_{n-2}(r) - c_{n-2}(r) - 2i b_{n-2}(r)\right\}e^{i(n-2)\beta}\,dr \nonumber\\ 
& \quad +   \int_{t}^{R} \left\{-a_{n+2}(r) + c_{n+2}(r) -2i b_{n+2}(r)\right\}e^{i(n+2)\beta}\,dr \nonumber \\ 
& \quad + 2  \int_{t}^{R} \left\{a_n(r) + c_n(r) \right\}e^{in\beta}\,dr \nonumber\\ 
& \quad - \int_I \left\{a_{n-2}(r) - c_{n-2}(r) - 2i b_{n-2}(r)\right\}e^{2i\theta}e^{i(n-2)\beta}\,dr \nonumber\\ 
& \quad +   \int_I \left\{-a_{n+2}(r) + c_{n+2}(r) -2i b_{n+2}(r)\right\}e^{2i\theta}e^{i(n+2)\beta}\,dr \nonumber\\ 
& \quad + 2  \int_I \left\{a_n(r) + c_n(r) \right\}e^{2i\theta}e^{in\beta}\,dr
 \end{align}
 and 
 \begin{align}\label{eq:H_n}
4 H_n(t)  &=  \int_{t}^{R} \left\{ 2 b_{(n-2)}(r) + i \left( a_{(n-2)}(r) - c_{(n-2)}(r)\right)\right\}e^{i(n-2)\beta}\,dr \nonumber \\ 
& \quad +   \int_{t}^{R} \left\{ 2 b_{(n+2)}(r) - i \left( a_{(n+2)}(r) - c_{(n+2)}(r)\right)\right\}e^{i(n+2)\beta}\,dr \nonumber\\ 
& \quad + \int_I \left\{ 2 b_{(n-2)}(r) + i \left( a_{(n-2)}(r) - c_{(n-2)}(r)\right)\right\}e^{2i\theta}e^{i(n-2)\beta}\,dr \nonumber\\ 
& \quad + \int_I \left\{ 2 b_{(n+2)}(r) - i \left( a_{(n+2)}(r) - c_{(n+2)}(r)\right)\right\}e^{2i\theta}e^{i(n+2)\beta}\,dr .
 \end{align}
 %%%%%%%%%%%%%%%%%%%%%%%%%%%%%%%%%%%%%%%%%%%%%%%%%%%%%%%%%%%%%%%%%%%%%%%%%%%%
Adding equations \eqref{eq:F_n} and \eqref{eq:G_n}, we have
 \begin{align}\label{eq:F_n + G_n}
       F_n(t) + G_n(t)  = \int_{t}^{R} \left[a_{n}(r) + c_n(r) \right]e^{in\beta}\,dr + \int_{I} \left[a_{n}(r) + c_n(r) \right] e^{in\beta}e^{2i\theta}\,ds,
 \end{align}
 and after subtracting equation \eqref{eq:G_n} from equation \eqref{eq:F_n}, we get
 \begin{align}\label{eq:F_n - G_n}
       2\left(F_n(t) - G_n(t)\right)  &=  \int_{t}^{R} \left\{a_{n-2}(r) - c_{n-2}(r) - 2i b_{n-2}(r)\right\}e^{i(n-2)\beta}\,dr \nonumber\\ 
& \quad +   \int_{t}^{R} \left\{a_{n+2}(r) - c_{n+2}(r) +2i b_{n+2}(r)\right\}e^{i(n+2)\beta}\,dr \nonumber \\ 
& \quad +  \int_I \left\{a_{n-2}(r) - c_{n-2}(r) - 2i b_{n-2}(r)\right\}e^{2i\theta}e^{i(n-2)\beta}\,ds \nonumber\\ 
& \quad +   \int_I \left\{a_{n+2}(r) - c_{n+2}(r) +2i b_{n+2}(r)\right\}e^{2i\theta}e^{i(n+2)\beta}\,ds.
 \end{align}
 Now multiply equation \eqref{eq:H_n} with $i$ and add it to equation \eqref{eq:F_n - G_n}, we have
 \begin{align}\label{F_n - G_n + 2iH_n}
    F_n(t) - G_n(t) + 2 i  H_n(t) &=  \int_{t}^{R} \left\{a_{n+2}(r) - c_{n+2}(r) +2i b_{n+2}(r)\right\}e^{i(n+2)\beta}\,dr \nonumber \\ 
 & \quad +   \int_I \left\{a_{n+2}(r) - c_{n+2}(r) +2i b_{n+2}(r)\right\}e^{2i\theta}e^{i(n+2)\beta}\,ds.
\end{align}
Multiply equation \eqref{eq:H_n} with $i$ and subtract it from equation \eqref{eq:F_n - G_n}, we have
\begin{align}\label{F_n - G_n - 2iH_n}
    F_n(t) - G_n(t) - 2 i  H_n(t) &= \int_{t}^{R} \left\{a_{n-2}(r) - c_{n-2}(r) - 2i b_{n-2}(r)\right\}e^{i(n-2)\beta}\,dr \nonumber\\ 
& \quad + \int_I \left\{a_{n-2}(r) - c_{n-2}(r) - 2i b_{n-2}(r)\right\}e^{2i\theta}e^{i(n-2)\beta}\,ds
\end{align}
Equations \eqref{F_n - G_n + 2iH_n} and \eqref{F_n - G_n - 2iH_n} can be further rewritten in the following form:
\begin{align}\label{F_{n-2} - G_n + 2iH_n}
    F_{n-2}(t) - G_{n-2}(t) + 2 i  H_{n-2}(t) &=  \int_{t}^{R} \left\{a_{n}(r) - c_{n}(r) +2i b_{n}(r)\right\}e^{in\beta}\,dr \nonumber \\ 
 & \quad +   \int_I \left\{a_{n}(r) - c_{n}(r) +2i b_{n}(r)\right\}e^{2i\theta}e^{in\beta}\,ds.
\end{align}
and
\begin{align}\label{F_{n+2} - G_n - 2iH_n}
    F_{n+2}(t) - G_{n+2}(t) - 2 i  H_{n+2}(t) &=  \int_{t}^{R} \left\{a_{n}(r) - c_{n}(r) -2i b_{n}(r)\right\}e^{in\beta}\,dr \nonumber \\ 
 & \quad +   \int_I \left\{a_{n}(r) - c_{n}(r) - 2i b_{n}(r)\right\}e^{2i\theta}e^{in\beta}\,ds.
\end{align}
By simplifying the above two expressions in  \eqref{F_{n+2} - G_n - 2iH_n}, \eqref{F_{n-2} - G_n + 2iH_n} and using \eqref{eq:F_n + G_n}, we get
\begin{align*}
   \left( 2 F_n(t) + F_{n+2}(t) +F_{n-2}(t)\right) +& \left( 2 G_n(t) - G_{n+2}(t) - G_{n-2}(t)\right) +2i \left(  H_{n-2}(t)  - H_{n+2}(t)\right) \nonumber\\ &= 4 i\left( \int_{t}^{R} a_{n}(r) e^{in\beta}\,ds  + \int_I a_{n}(r) e^{2i\theta} e^{in\beta}\,ds\right)
\end{align*}
and
\begin{align*}
   \left(  F_{n-2}(t) - F_{n+2}(t)\right) - & \left(  G_{n-2}(t) - G_{n+2}(t)\right) +2i \left(  H_{n-2}(t)  + H_{n+2}(t)\right) \nonumber\\ &= 4 i \left(\int_{t}^{R} b_{n}(r) e^{in\beta}\,ds  + \int_I b_{n}(r) e^{2i\theta} e^{in\beta}\,ds\right)
\end{align*}
and
\begin{align*}
   \left( 2 F_{n}(t)- F_{n-2}(t) - F_{n+2}(t)\right) +& \left(  2 G_n(t)+ G_{n-2}(t) + G_{n+2}(t)\right) +2i \left(  H_{n+2}(t)  - H_{n-2}(t)\right) \nonumber\\ &= 4  \left(\int_{t}^{R} c_{n}(r) e^{in\beta}\,ds  + \int_I c_{n}(r) e^{2i\theta} e^{in\beta}\,ds\right).
\end{align*}
\end{proof}
As mentioned earlier, our main goal is to reconstruct symmetric $2$-tensor fields $\vf$ from the combination of $\Lc \vf$, $\Tc \vf$ and $\Mc \vf$. For this, we first show that the Mellin transform $a_n$, $b_n$ and $c_n$ can be explicitly expressed in terms of the Mellin transform of $F_n$, $G_n$ and $H_n$. In the end, by using the inversion formulas for the Mellin transform, we recover $a_n$, $b_n$ and $c_n$ explicitly. 
\begin{theorem}\label{th: Melin transforms for Fourier coefficients}
For $n \in \mathbb{Z}$, let $a_n$, $b_n$ and $c_n$ are the Fourier coefficients (defined in \eqref{eq:Fourier series of f11}, \eqref{eq:Fourier series of f12}and \eqref{eq:Fourier series of f22}) of components of a symmetric $2$-tensor field $\displaystyle \vf  \in C_0^\infty\left(\DR, S^2(\mathbb{R}^2)\right)$. Then, we have
\begin{align*}
\mathcal{P}a_n(s) = \frac{\Pc \left(2F_{n}+ F_{n+2} + F_{n-2} + 2 G_n - G_{n+2} \right)(s-1) +2i \Pc \left(  2 H_{n-2}  - H_{n+2}\right)(s-1)  }{4\left[\frac{1}{s-1} + \mathcal{P}h_n(s-1)\right]}, 
\end{align*} 
\begin{align*}
\mathcal{P}b_n(s) = \frac{2\Pc \left(   H_{n-2} + H_{n+2}\right)(s-1) - i \Pc \left( F_{n-2} - F_{n+2}\right) +\left(  2 G_n - G_{n+2} \right)(s-1)  }{4\left[\frac{1}{s-1} + \mathcal{P}h_n(s-1)\right]}, 
\end{align*}  
and
\begin{align*}
\mathcal{P}c_n(s) = \frac{\Pc \left(2F_{n}- F_{n-2} - F_{n+2} + 2 G_n + G_{n-2} + G_{n+2}\right)(s-1) +2i \Pc \left( H_{n+2}  - H_{n-2}\right)(s-1)  }{4\left[\frac{1}{s-1} + \mathcal{P}h_n(s-1)\right]}, 
\end{align*} 
for $Re(s) > 1$,
where the piece-wise function $h_n$ is defined as follows: 
\begin{equation*}
h_{n}(t) = \begin{cases}
(-1)^n e^{i\theta} e^{in{\psi(t)}}\frac{1+t\cos[\psi(t)] + t^2 \sin{[\psi(t)]} \frac{\sin{\theta}}{\sqrt{1-t^2 \sin^2(\theta)}}}{\sqrt{1+t^2 + 2t\cos{[\psi(t)]}}}, & \quad \mbox{ for } 0< t \leq 1\\
(-1)^n e^{i\theta}e^{in{\psi(t)}}\frac{1+t\cos[\psi(t)] + t^2 \sin{[\psi(t)]} \frac{\sin{\theta}}{\sqrt{1-t^2 \sin^2(\theta)}}}{\sqrt{1+t^2 + 2t\cos{[\psi(t)]}}}\\ 
\quad - e^{i\theta} e^{in[2\theta - {\psi(t)}]}\frac{1-t\cos[2\theta - \psi(t)] + t^2 \sin{[2\theta - \psi(t)]} \frac{\sin{\theta}}{\sqrt{1-t^2 \sin^2(\theta)}}}{\sqrt{1+t^2 - 2t\cos{[2\theta - \psi(t)]}}},   &  \quad \mbox{ for } 1<t < \frac{1}{\sin{\theta}}  \\
0, & \quad \mbox{ for } t > \frac{1}{\sin{\theta}}
\end{cases}
\end{equation*}
with $\psi(t) = \sin^{-1}{(t\sin{\theta})} + \theta$. 
\end{theorem}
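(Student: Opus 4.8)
The three identities \eqref{eq:relation between a_n, F_n, G_n and H_n}--\eqref{eq:relation between c_n, F_n, G_n and H_n} of the Proposition are already decoupled: each expresses a single radial Fourier coefficient ($a_n$, $b_n$ or $c_n$) against a fixed linear combination of $F_n,F_{n\pm2},G_n,G_{n\pm2},H_{n\pm2}$, and in each the right-hand side splits as a \emph{straight-leg tail} $\int_t^R a_n(r)\,dr$ --- the leg from the boundary of $\DR$ to the scattering vertex runs radially, so its arclength element is $dr$ and the polar angle of the running point is constant, which removes the angular exponential --- plus a \emph{scattered-leg integral} $\int_I a_n(r)\,e^{2i\theta}e^{in\beta}\,ds$ over the second leg. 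The plan is: (i) rewrite the scattered-leg integral as a Mellin-type convolution $\int_0^\infty a_n(r)\,h_n(t/r)\,dr$ with $h_n$ the kernel of the statement; (ii) apply $\Pc$ in $t$, set the Mellin variable to $s-1$, and use the two Mellin identities listed above; (iii) solve the resulting scalar relation for $\Pc a_n(s)$, repeating verbatim for $b_n$ and $c_n$.

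Step (i) is the geometric core. Working in the frame $B(0,t)$, the second leg emanates from the scattering vertex --- at distance $t$ from the origin --- in the direction $\vv_\phi$, and since $\langle\text{vertex},\vv_\phi\rangle=-t\cos\theta$ the distance to the origin along the leg satisfies $r(s)^2=s^2-2st\cos\theta+t^2$; thus $r$ decreases from $t$ to its minimum $t\sin\theta$ (at $s=t\cos\theta$) and then increases back to $r=R$, where the leg leaves $\DR$. I would split the leg at the point of closest approach into an \emph{incoming} piece ($t\sin\theta\le r\le t$) and an \emph{outgoing} piece ($t\sin\theta\le r\le R$), and change variables $s\mapsto r$ on each: $2r\,dr=(2s-2t\cos\theta)\,ds$ gives $ds=\pm r\,dr/\sqrt{r^2-t^2\sin^2\theta}=\pm dr/\sqrt{1-(t/r)^2\sin^2\theta}$, while the law of sines in the triangle with vertices the origin, the scattering vertex, and the running point --- whose angle at the scattering vertex equals $\theta$ --- gives the polar angle of the running point as $2\theta-\psi(t/r)$ on the incoming piece and $\psi(t/r)-\pi$ on the outgoing piece, with $\psi$ as in the statement; the $-\pi$ on the outgoing branch is precisely what produces the factor $(-1)^n$ once $e^{in\beta}$ is inserted. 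Grouping by the value of $r$: a radius $r>t$ is met only by the outgoing piece (one term), a radius with $t\sin\theta<r<t$ is met by both pieces (two terms), and a radius $r<t\sin\theta$ is never met (zero). Collecting these contributions with the $e^{2i\theta}$ of \eqref{eq:relation between a_n, F_n, G_n and H_n} and the phases just found yields exactly the three-case function $h_n$; one checks that each branch of its forbidding expression simplifies to $(-1)^n e^{i\theta}e^{in\psi(t/r)}/\sqrt{1-(t/r)^2\sin^2\theta}$ or to $e^{i\theta}e^{in(2\theta-\psi(t/r))}/\sqrt{1-(t/r)^2\sin^2\theta}$, hence depends on $(r,t)$ only through $t/r$. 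Therefore $\int_I a_n(r)\,e^{2i\theta}e^{in\beta}\,ds=\int_0^\infty a_n(r)\,h_n(t/r)\,dr$, and similarly for $b_n$ and $c_n$.

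For (ii)--(iii), apply $\Pc$ in $t$ to \eqref{eq:relation between a_n, F_n, G_n and H_n}. By linearity the left side becomes, at Mellin argument $s-1$, the stated combination of $\Pc F_n,\Pc F_{n\pm2},\Pc G_n,\Pc G_{n\pm2},\Pc H_{n\pm2}$ (the Fourier-index shifts $n\mapsto n\pm2$ only relabel which coefficient functions occur; they do not touch the Mellin variable). On the right side, property (2) gives $\Pc\bigl[\int_t^R a_n\bigr](s-1)=\Pc a_n(s)/(s-1)$, while a one-line change of variables $u=t/r$ combined with property (1) gives $\Pc\bigl[\int_0^\infty a_n(r)h_n(t/r)\,dr\bigr](s-1)=\Pc a_n(s)\,\Pc h_n(s-1)$; here one uses that $h_n$ is bounded near $0$ and supported in $(0,1/\sin\theta]$ with at most an integrable endpoint singularity, so that $\Pc h_n(s-1)$ --- and likewise $\Pc a_n(s)$, $\Pc F_n(s-1)$ and the other transforms appearing, all the relevant functions being bounded near $0$ --- converges exactly for $\operatorname{Re}(s)>1$. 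Collecting, the transformed right side equals $4\bigl[\tfrac{1}{s-1}+\Pc h_n(s-1)\bigr]\Pc a_n(s)$ up to the overall constant recorded in the statement (read off by tracking the $e^{\pm2i\theta}$ and $(-1)^n$ factors of step (i)), and dividing yields the asserted formula for $\Pc a_n(s)$; \eqref{eq:relation between b_n, F_n, G_n and H_n} and \eqref{eq:relation between c_n, F_n, G_n and H_n} give $\Pc b_n$ and $\Pc c_n$ by the identical computation, with the same denominator since the leg-splitting does not depend on which coefficient is solved for.

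The main obstacle is step (i): correctly identifying the annulus $t\sin\theta<r<t$ that the second leg sweeps twice, pinning down the two polar-angle branches --- and with them the $(-1)^n$ and $e^{i\theta}$ phases --- and the signs of the two arclength elements, and then verifying that the resulting kernel depends on $(r,t)$ only through $t/r$, which is what turns the Mellin transform of the convolution into a product of Mellin transforms. A secondary point worth settling is the nonvanishing of $\tfrac{1}{s-1}+\Pc h_n(s-1)$ for $\operatorname{Re}(s)>1$, which is needed to legitimise the division --- equivalently, the injectivity on that half-plane of the linear map sending the Fourier coefficients of $\vf$ to those of the data.
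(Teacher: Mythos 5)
Your proposal follows essentially the same route as the paper: split the scattered leg at its point of closest approach $r=t\sin\theta$ into incoming and outgoing pieces, change variables from arclength to $r$, recognize the result as a Mellin convolution $\int_0^\infty a_n(r)h_n(t/r)\,dr$, and then apply the Mellin transform with its two listed properties and divide. Your simplification of the kernel branches to $(-1)^n e^{i\theta}e^{in\psi(t/r)}/\sqrt{1-(t/r)^2\sin^2\theta}$ checks out, and your closing remark about the nonvanishing of $\tfrac{1}{s-1}+\Pc h_n(s-1)$ flags a point the paper leaves implicit.
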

%The following analysis is to establish relations between these Fourier coefficients $a_n$, $b_n$, $p_n$, and $q_n$, which will be later used in Theorem \ref{th: Melin transforms for Fourier coefficients}.

%%%%%%%%%%%%%%%%%%%%%%%%%%%%%%%%%%%%%%%%%%%%%%%%%%%%%%%%%%%%%%%%%%%%%%%%%%%%%%%%%%%%%%%%%%%%%%%%%%%%%%%%%%%%%%%%%%%%%%%%%%%%%%%%%%%%%%%%%%%%%%%%%%%%%%%%%%%%%%%%%%%%%%%%%%%%%%%%%%%%%%%%%%%%%%%%%%%%%%%%%%%%%%%%%%%%%%%%%%%%%%%%%%%%%%%%%%%%%%%%%%%%%%%%%%%%%%%%%%%%%%%%%%%%%%%%%%%%%%%%%%%%%%%%%%%%%%%%%%%%
% Let 
% \begin{align*}
%         f(\phi,r) := \left( \sum_{n={-\infty}}^{\infty} a_n(r) e^{in\phi},\sum_{n={-\infty}}^{\infty} b_n(r) e^{in\phi} \right)  
% \end{align*}
\begin{proof} 

%%%%%%%%%%%%%%%%%%%%%%%%%%%%%%%%%%%%%%%%%%%%%%%%%%%%%%%%%%%%%%%%%%%%%%%%%%%%%%%%%%%%%%%%%%%%%%%%%%%%%%%%%%%%%%%%%%%%%%%%%%%%%%%%%%%%%%%%%%%%%%%%%%%%%%%%%%%%%%%%%%%%%%%%%%%%%%%%%%%%%%%%%%%%%%%%%%%%%%%%%%%%%%%%%%%%%%%%%%%%%%%%%%%%%%%%%%%%%%%%%%%%%%%%%%%%%%%%%%%%%%%%%%%%%%%%%%%%%%%%%%%%%%%%%%%%%%%%%%%%%%%%%%%%%%%%%%%%%%%%%%%%%%%%%%%%%%%%%%%%%%%%%%%%%%%%%%%%%%%%%%%%%%%%%%%%%%%%%%%%%%%%%%%%%%%%%%%%%%%%%%%%%%%%%%%%%%%%%%%%%%%%%%%%%%%%%%%%%%%%%%%%%%%%%%%%%%%%%%%%%%%%%%%%%%%%%%%%%%%%%%%%%%%%%%%%%%%%%%%%%%%%%%%%%%%%%%%%

First, we split the line segment $I$ in two sections $I_1$ and $I_2$, where $ I_1 := \{(x,y) : y = (x-t)\tan{\theta},~ t\sin^2{\theta} \leq x \leq t \}$ and  $ I_2 := \{(x,y) : y = (x-t)\tan{\theta},~ -\infty \leq x \leq t\sin^2{\theta} \}$ (see \ref{fig: B(beta, t)}). The following representations, for the polar angles $ \beta(r)$ that are obtained by a point $(x,y)\in I$:
\begin{align*}
    \beta(r) = \left\{\begin{array}{cc}
     2 \theta -  \psi(t/r)  ,     & \quad \mbox{ for }(x, y) \in I_1 \\
     \pi +   \psi(t/r) ,   & \quad \mbox{ for }(x, y) \in I_2
    \end{array},\right. 
\end{align*}
where $\psi(t) = \sin^{-1}\left(t \sin \theta\right) +  \theta$, and 
\begin{align*}
    ds = \frac{r - t\cos{\beta} + tr \frac{d\beta}{dr}\sin{\beta}}{\sqrt{{r}^2 + t^2 - 2{r} t \cos{\beta}}}\,dr.
\end{align*}
where $ds$ be the length measures for the line interval $I$ (please refer to \cite[Theorem 2]{Ambartsoumian_2013} for more details).

Hence we have
\begin{align*}
ds = \begin{cases}
\frac{r-t\cos[2\theta - \psi(\frac{t}{r})] + \frac{t^2}{r} \sin{[2\theta - \psi(\frac{t}{r})]} \frac{\sin{\theta}}{\sqrt{1-\frac{t^2}{r^2} \sin^2\theta}}}{\sqrt{r^2+t^2 - 2tr\cos{[2\theta - \psi(\frac{t}{r})]}}}dr,  & \quad \mbox{ for } (x,y) \in I_1\\
\frac{r - t\cos[\pi + \psi(\frac{t}{r})] - \frac{t^2}{r} \sin{[\pi + \psi(\frac{t}{r})]} \frac{\sin{\theta}}{\sqrt{1-\frac{t^2}{r^2} \sin^2\theta}}}{\sqrt{r^2+t^2 - 2tr\cos{[\pi + \psi(\frac{t}{r})]}}}dr, & \quad \mbox{ for } (x,y) \in I_2.
\end{cases}
\end{align*}
After putting the value of $\beta$ and $ds$ into the expression \eqref{eq:relation between a_n, F_n, G_n and H_n}, we get
\begin{align*}
&\frac{1}{4i} \left[\left( 2 F_n(t) + F_{n+2}(t) +F_{n-2}(t)\right) + \left( 2 G_n(t) - G_{n+2}(t) - G_{n-2}(t)\right) +2i \left(  H_{n-2}(t)  - H_{n+2}(t)\right)\right]\\&= \int_{t}^{R} a_n(r)\,dr + \int_{t}^{t\sin{\theta}}a_n(r) e^{i\theta} e^{in[2\theta - {\psi(\frac{t}{r})}]}\frac{r-t\cos[2\theta - \psi(\frac{t}{r})] + \frac{t^2}{r} \sin{[2\theta - \psi(\frac{t}{r})]} \frac{\sin{\theta}}{\sqrt{1-\frac{t^2}{r^2} \sin^2\theta}}}{\sqrt{r^2+t^2 - 2tr\cos{[2\theta - \psi(\frac{t}{r})]}}}\,dr\\ & \quad + \int_{t\sin{\theta}}^{\infty}a_n(r) e^{i\theta} e^{in[\pi + {\psi(\frac{t}{r})}]}\frac{r - t\cos[\pi + \psi(\frac{t}{r})] - \frac{t^2}{r} \sin{[\pi + \psi(\frac{t}{r})]} \frac{\sin{\theta}}{\sqrt{1-\frac{t^2}{r^2} \sin^2\theta}}}{\sqrt{r^2+t^2 - 2tr\cos{[\pi + \psi(\frac{t}{r})]}}}\,dr \\     
 &= \int_{t}^{R} a_n(r)\,dr - \int_{t\sin{\theta}}^{t}a_n(r) e^{i\theta} e^{in[2\theta - {\psi(\frac{t}{r})}]}\frac{1-\frac{t}{r}\cos[2\theta - \psi(\frac{t}{r})] + \frac{t^2}{r^2} \sin{[2\theta - \psi(\frac{t}{r})]} \frac{\sin{\theta}}{\sqrt{1-\frac{t^2}{r^2} \sin^2\theta}}}{\sqrt{1+\frac{t^2}{r^2} - 2\frac{t}{r}\cos{[2\theta - \psi(\frac{t}{r})]}}}\,dr\\ & \quad + (-1)^n\int_{t\sin{\theta}}^{\infty}a_n(r) e^{i\theta} e^{in[{\psi(\frac{t}{r})}]}\frac{1 + \frac{t}{r}\cos[\psi(\frac{t}{r})] + \frac{t^2}{r^2} \sin{[\psi(\frac{t}{r})]} \frac{\sin{\theta}}{\sqrt{1-\frac{t^2}{r^2} \sin^2\theta}}}{\sqrt{1+\frac{t^2}{r^2} + 2\frac{t}{r}\cos{[\psi(\frac{t}{r})]}}}\,dr \\ 
&= \int_{t}^{R} a_n(r)\,dr + \{[r a_n(r)]\times h_n\}(t) 
\end{align*}
where
\begin{equation*}
\{f\times g\}(s) = \int_{0}^{\infty} f(r)g\left(\frac{s}{r}\right)\frac{dr}{r}.
\end{equation*}
Now, applying the Mellin transform on both sides in the last expression, and using the  properties \eqref{p1} and \eqref{p2} of the Mellin transform, we get 
\begin{align*}
\Pc \left(2F_{n}+ F_{n+2} + F_{n-2} + 2 G_n - G_{n+2} \right)&(s-1) + 2i \Pc \left(  2 H_{n-2}  - H_{n+2}\right)(s)\\  &= 4  \left[\frac{1}{s}\Pc a_n(s+1) + \Pc a_n(s+1)\Pc h_n(s)\right].
\end{align*}
Finally, we have the following expression for $a_n$ in terms of Mellin transform of given data
\begin{align*}
\mathcal{P}a_n(s) = \frac{\Pc \left(2F_{n}+ F_{n+2} + F_{n-2} + 2 G_n - G_{n+2} \right)(s-1) +2i \Pc \left(  2 H_{n-2}  - H_{n+2}\right)(s-1)  }{4\left[\frac{1}{s-1} + \mathcal{P}h_n(s-1)\right]},
\end{align*} 
 where $Re(s) > 1$. We also derive expressions for $b_n$ and $c_n$ in terms of the Mellin transform by using similar types of analysis as we performed for the above expression, and they are given as follows:
\begin{align*}
\mathcal{P}b_n(s) = \frac{2\Pc \left(   H_{n-2} + H_{n+2}\right)(s-1) - i \Pc \left( F_{n-2} - F_{n+2} + 2 G_n - G_{n+2} \right)(s-1)  }{4\left[\frac{1}{s-1} + \mathcal{P}h_n(s-1)\right]}, 
\end{align*}  
and
\begin{align*}
\mathcal{P}c_n(s) = \frac{\Pc \left(2F_{n}- F_{n-2} - F_{n+2} + 2 G_n + G_{n-2} + G_{n+2}\right)(s-1) +2i \Pc \left( H_{n+2}  - H_{n-2}\right)(s-1)  }{4\left[\frac{1}{s-1} + \mathcal{P}h_n(s-1)\right]}, 
\end{align*} 
for $Re(s) > 1$. This completes the proof of the Theorem \ref{th: Melin transforms for Fourier coefficients}.
\end{proof}
Now we use the  inversion formula for the Mellin transform to get following formulas for $a_n$, $b_n$, and $c_n$ for any $t > 1$:
%%%%%%%%%%%%%%%%%%%%%%%%%%%%%%%%%%%%%%%%%%%%%%%%%%%%%%%%%%%%%%%%%%%%%%%%%%%%%%%%%%%%%%%%%%%%%%%%%%%%%%%%%%
   \begin{equation}\label{eq: Fourier coefficient of f_11}
       a_n(r) = \lim_{T\to\infty} \frac{1}{2\pi i} \int_{t-Ti}^{t+Ti} r^{-s}  \Ac_n(s)\,ds,
    \end{equation}
    
   \begin{equation}\label{eq: Fourier coefficient of f_22}
      b_n(r) = \lim_{T\to\infty} \frac{1}{2\pi i} \int_{t-Ti}^{t+Ti} r^{-s}  \Bc_n(s)\,ds,
    \end{equation}
  \begin{equation}\label{eq: Fourier coefficient of f_12}
   c_n(r) = \lim_{T\to\infty} \frac{1}{2\pi i} \int_{t-Ti}^{t+Ti} r^{-s}  \Cc_n(s)\,ds,
    \end{equation}  
    where
    \begin{align*}
        \Ac_n(s) = \frac{\Pc \left(2F_{n}+ F_{n+2} + F_{n-2} + 2 G_n - G_{n+2} \right)(s-1) +2i \Pc \left(  2 H_{n-2}  - H_{n+2}\right)(s-1)  }{4\left[\frac{1}{s-1} + \mathcal{P}h_n(s-1)\right]},
    \end{align*}
     \begin{align*}
        \Bc_n(s) = \frac{2\Pc \left(   H_{n-2} + H_{n+2}\right)(s-1) - i \Pc \left( F_{n-2} - F_{n+2} +  2 G_n - G_{n+2} \right)(s-1)  }{4\left[\frac{1}{s-1} + \mathcal{P}h_n(s-1)\right]},
    \end{align*}
     \begin{align*}
        \Cc_n(s) =  \frac{\Pc \left(2F_{n}- F_{n-2} - F_{n+2} + 2 G_n + G_{n-2} + G_{n+2}\right)(s-1) +2i \Pc \left( H_{n+2}  - H_{n-2}\right)(s-1)  }{4\left[\frac{1}{s-1} + \mathcal{P}h_n(s-1)\right]}.
    \end{align*}
This completes the recovery of the Fourier coefficients of each components of $2$-tensor field which will give the reconstruction for each components of $2$-tensor fields. Hence the proof of Theorem \ref{th:partial data recovery}, is completed. 

% \section{General Remarks and Discussion}\label{sec;gen comments}
% \begin{enumerate}
% \item  \noindent For the data utilized in Theorem \ref{th:partial data recovery}, the recovery of a symmetric $m$-tensor field is more complicated. But from here we can expect a similar approach can be worked for recovery of tensor fields of  any order. The author's are now unable to construct a proof for the general $m$. 
%      \item In Theorem \ref{th: Melin transforms for Fourier coefficients}, the complex character of the kernel $h_n$ makes the relation between $a_n$, $b_n$, and $c_n$ and $\Ac_n(s)$, $\Bc_n(s)$, and $\Cc_n(s)$ extremely complicated. Simpler inversion techniques for the V-line transform may also exist in these setups, since many generalized regular ray transforms contain many inversion formulas and procedures to recover a symmetric $2$-tensor field. 
%     \item The inversion formula is challenging to implement numerically and the author intends to tackle this issue in a future work.
% \end{enumerate}

%%%%%%%%%%%%%%%%%%%%%%%%%%%%%%%%%%%%%%%%%%%%%%%%%%%
%%%%%%%%%%%%%%%%%%%%%%%%%%%%%%%%%%%%%%%%%%%%%%%%%%%%%%%%%%%%%%%%%%%%%%%
\section{Acknowledgements}\label{sec:acknowledgements}
The author acknowledges the support of UGC, the Government of India, with a research fellowship. This work was partially supported by the FIST program of the Department of Science and Technology, Government of India, Reference No. SR/FST/MS-I/2018/22(C). I would like to thank  Dr. Manmohan Vashisth and Dr. Rohit Kumr Mishra for suggesting the problem and useful discussions while working on this article.

\addcontentsline{toc}{section}{References}

\end{document}